\documentclass[12pt]{article}
\usepackage{graphicx,color}

\usepackage{amsmath, amsthm, amssymb}

\newtheorem{Theorem}{Theorem}[section]
\newtheorem{Definition}{Definition}[section]
\newtheorem{Lemma}{Lemma}[section]
\newtheorem{Proposition}[Theorem]{Proposition}

\language1

\begin{document}
\title{Structurally Stable Homoclinic Classes}
\author{Xiao Wen}

\maketitle

\begin{abstract}
In this paper we study structurally stable homoclinic classes. In a
natural way, the structural stability for an individual homoclinic
class is defined through the continuation of periodic points. Since
the homoclinic classes is not innately locally maximal, it is hard
to answer whether structurally stable homoclinic classes are
hyperbolic. In this article, we make some progress on this question.
We prove that if a homoclinic class is structurally stable, then it
admits a dominated splitting. Moreover we prove that codimension one
structurally stable classes are hyperbolic. Also, if the
diffeomorphism is far away from homoclinic tangencies, then
structurally stable homoclinic classes are hyperbolic.
\end{abstract}

%The title of your section 1

\section{Introduction}
 An  important notion in dynamical systems  coming from
 Physics and Mechanics is the so called structural stability.
 Precisely, a diffeomorphism $f$ is  {\it structurally stable} if there
is a $C^1$ neighborhood ${\mathcal U}$ of $f$ in ${\rm Diff}(M)$
such that, for every $g\in {\mathcal U}$, there is a homeomorphism
$h: M\to M$ such that $h\circ f=g\circ h$. Since such a
homeomorphism $h$ preserves orbits, a structurally stable system is
one that has robust dynamics, that is, one whose orbital structure
remains unchanged under perturbations.

The non-recurrent part of dynamical systems is fairly robust with
respect to perturbations. But the recurrent part is fragile and, to
survive from perturbations, it needs  the condition of (various
versions of) hyperbolicity. For instance, a single periodic orbit is
structurally stable if and only if it is hyperbolic, meaning no
eigenvalue of modulus 1. For the whole system $f$ to be structurally
stable, a crucial condition needed is that ${\rm CR}(f)$, the set
that captures all the recurrence, is a hyperbolic set. Recall a
compact invariant set $\Lambda\subset M$ of $f$ is called {\it
hyperbolic} if, for
 each $x\in \Lambda$, the tangent space $T_x M$  splits into
 $ T_x M=E^s(x)\oplus E^u(x)$ such that
$$ Df(E^s(x))=E^s(f(x)), \ Df(E^u(x))=E^u(f(x))$$
and, for some constants $C\ge 1$ and $0<\lambda<1$,
$$ |Df^n(v)|\le C\lambda^n |v|, \ \forall x\in \Lambda, v\in E^s(x), n\ge 0,$$
$$ |Df^{-n}(v)|\le C\lambda^n |v|, \ \forall x\in \Lambda, v\in E^u(x), n\ge 0.$$

Briefly, a hyperbolic set is one at which tangent vectors split into
two directions, contracting and expanding upon iterates,
respectively, with uniform exponential rates. This definition
extends the hyperbolicity condition from a single periodic orbit to
a general compact invariant set. It is closely related to structural
stability. Indeed, the following remarkable result, known as the
stability conjecture of Palis and Smale \cite{PS1}, is fundamental
to dynamical systems:

{\bf Theorem} (Ma\~n\'e \cite{Man2}). \ {\it
 If a diffeomorphism $f$ is structurally stable then
${\rm CR}(f)$ is hyperbolic.}

In this paper we consider a more general version of structural
stability. It is for an individual ``basic piece" of the dynamics,
which is not necessarily manifold $M$ or the whole nonwandering set
$\Omega(f)$. A homoclinic class and a chain recurrent class are two
typical types of ``basic pieces''.

Let $M$ be a compact $C^\infty$ Riemannian manifold without
boundary, and $f:M\to M$ be a diffeomorphism.  Denote ${\rm
Diff}(M)$ the space of diffeomorphisms of $M$ with the
$C^1$-topology.

Let $p$ and $q$ be hyperbolic periodic points. We say that $p$ and
$q$ are {\it homoclinically related} and write $p\sim q$, if
$W^s({\rm Orb}(q))$ (resp. $W^u({\rm Orb}(q))$) and $W^u({\rm
Orb}(p))$ (resp. $W^s({\rm Orb}(p))$) have non-empty transverse
intersections. The set $H(p,f)=\overline{\{q:q\sim p\}}$ is called
the {\it homoclinic class} of $p$. Equivalently, $H(p,f)$ is the
closure of set of the transversely intersection points of $W^s({\rm
Orb}(p))$ and $ W^u({\rm Orb}(p))$

A hyperbolic periodic point has its natural ``continuations".
Precisely, let $p\in M$ be a hyperbolic periodic point of $f$ of
period $k$. Then there exist a compact neighborhood $U$ of ${\rm
Orb}(p)$ in $M$ and a $C^1$-neighborhood $\mathcal{U}(f)$ of $f$
such that  for any $g\in \mathcal{U}(f)$, the maximal invariant set
$$\bigcap_{n=-\infty}^{\infty} g^n(U)$$
of $g$ in $U$ consists of a single periodic orbit $O_g$ of $g$ of
the same period as $p$, which is hyperbolic with ${\rm
Ind}(O_g)={\rm Ind}(p)$. Here ${\rm Ind}(p)$ denotes the {\it index}
of $p$, which is the dimension of the stable manifold of $p$. The
neighborhood $U$ can be chosen to be the union of $k$ arbitrarily
small disjoint balls, each containing exactly one point of ${\rm
Orb}(p)$ and one point of $O_g$. This identifies the {\it
continuation} $p_g$ of $p$ under $g$. Thus the notion of
continuation $p_g$ of $p$ is defined for $g$ sufficiently close to
$f$. As usual,  the (unique) homoclinic class of $g$ that contains
$p_g$ is denoted $H(p_g,g)$. Here is a natural general version of
structural stability for a single homoclinic class:

\begin{Definition}
Let $p$ be a hyperbolic  periodic point of $f$.  We say that
$H(p,f)$ is $C^1$-{\rm structurally stable} if there is a
neighborhood $\mathcal{U}$ of $f$ in ${\rm Diff}(M)$ such that, for
every $g\in \mathcal{U}$, there is a homeomorphism $h: H(p,f)\to
H(p_g,g)$ such that $h\circ f|_{H(p,f)}=g\circ h|_{H(p,f)}$, where
$p_g$ is the continuation of $p$.
\end{Definition}

Note that, while $h$ in this definition preserves periodic points of
$H_f(p)$, it is not clear if it preserves individual continuations.
For instance, it is not clear if $h(p)=p_g$. Also, since $h$  is a
homeomorphism only, it does not preserve the hyperbolicity of
periodic points.

In the article, we get some characterization of the structurally
stable homoclinic classes as follows:

\begin{Theorem}\label{Mainthm1}
Let $H(p,f)$ be a structurally stable homoclinic class of $f$, then
there are constants $C>1$ and $0<\lambda<1$ and an integer $m>0$
such that
\begin{enumerate}
\item $H(p,f)$ admit a dominated
splitting $T_{H(p,f)}M=E\oplus F$ with {\rm dim}$E$={\rm ind}$(p)$.
\item For any $q\sim p$,
$$\prod_{i=0}^{k-1}\|Df^m|_{E^s(f^{im}(q))}\|<C\lambda ^k,$$
$$\prod_{i=0}^{k-1}\|Df^{-m}|_{E^u(f^{-im}(q))}\|<C\lambda ^k,$$
where $k=[\pi(q)/m]$ {\rm(}$\pi(q)$ represents the minimal period of
$q$ and $[\cdot]$ represents the integer part{\rm )}.

\end{enumerate}
\end{Theorem}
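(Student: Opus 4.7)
The plan is to exploit structural stability of $H(p,f)$ to force a ``star''-type property on periodic orbits homoclinically related to $p$, then combine Franks' lemma with a Pliss-type argument to establish the uniform contraction in (2), and finally invoke Liao--Ma\~n\'e-style results to deduce the dominated splitting in (1) from (2).

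First, for every $q\sim p$ the transverse intersections between $W^s(\mathrm{Orb}(q))$ and $W^u(\mathrm{Orb}(p))$, and symmetrically between the unstable and stable manifolds, persist under $C^1$-perturbation. Hence the continuation $q_g$ automatically lies in $H(p_g,g)$ with $q_g\sim p_g$. Combined with the homeomorphism $h\colon H(p,f)\to H(p_g,g)$ supplied by structural stability, this shows that no periodic orbit homoclinically related to $p$ is destroyed by a $C^1$-small perturbation, and that the number of periodic points of each prescribed period inside the class is a rigid invariant of $f$. In particular, every such $q$ has index $\mathrm{ind}(p)$, and this is preserved under perturbation.

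The central step is to prove (2) by contradiction. Suppose the stable estimate fails: for every $C>1$, $0<\lambda<1$, and $m\ge 1$ there exists $q\sim p$ violating the stated inequality (the unstable case being symmetric). For $m$ chosen large, I would apply a Pliss-type argument to $\log\|Df^m|_{E^s}\|$ along $\mathrm{Orb}(q)$ to extract, possibly after replacing $q$ by a well-chosen iterate, a periodic orbit whose orbital geometric mean of $\|Df^m|_{E^s}\|$ is arbitrarily close to $1$. By Franks' lemma I would $C^1$-perturb $f$ along $\mathrm{Orb}(q)$ to a diffeomorphism $g$ under which the continuation orbit acquires an eigenvalue of modulus $1$ in the stable direction; a second $C^1$-small perturbation splits this eigenvalue, either changing the index of the orbit or removing it via a saddle--node bifurcation. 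The resulting $g'$ has a periodic orbit structure in $H(p_{g'},g')$ incompatible with that of $H(p,f)$, contradicting structural stability. For the argument to close, every perturbation must be localized inside a small tubular neighborhood of $\mathrm{Orb}(q)$ that avoids a finite family of transverse homoclinic intersections tying $q$ to $p$; this preserves the robust homoclinic relation with $p_{g'}$ and keeps the modified orbit inside $H(p_{g'},g')$.

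Once (2) is in hand, (1) follows from now-standard technology: uniform Pliss-type contraction on the stable bundle and expansion on the unstable bundle, along a dense set of periodic orbits of common index $\mathrm{ind}(p)$, yields a dominated splitting $T_{H(p,f)}M=E\oplus F$ with $\dim E=\mathrm{ind}(p)$ on the closure. I expect the principal obstacle to lie in the Franks-lemma perturbation in (2): because $H(p,f)$ is not innately locally maximal, guaranteeing that the perturbed orbit still lies in $H(p_{g'},g')$ is delicate, and the whole argument rests on localizing perturbations so as to preserve the robust transverse intersections that certify membership in the homoclinic class.
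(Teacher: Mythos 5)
Your high-level plan---derive a ``star''-type property on periodic orbits homoclinically related to $p$ from structural stability via Franks'-lemma perturbations, then deduce the uniform estimates and domination---is the right intuition and matches the paper's strategy. But the two steps you treat as routine are exactly where the paper does real work, and as sketched they have genuine gaps.

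First, your contradiction to structural stability does not close. You propose to perturb so that the orbit ``changes index'' or disappears in a saddle-node, and declare the resulting periodic-orbit structure incompatible with $H(p,f)$. But the conjugating homeomorphism $h$ in the definition of structural stability is only a homeomorphism: as the paper emphasizes, it does not preserve hyperbolicity, so it certainly does not preserve index, and an index change is simply not a conjugacy invariant. A saddle-node is closer, but you must guarantee that the net number of periodic points of some \emph{bounded} period inside the homoclinic class strictly changes, that no compensating orbits are created, that the offending orbit really leaves the class, and you must do all of this while keeping the perturbed orbit homoclinically related to $p$. The paper handles this with three devices you omit: (i) a preliminary perturbation making the weakest stable eigenvalue real and simple (Lemma~2.1); (ii) a localized ``fork'' perturbation at a non-$ss$-boundary point producing \emph{two} new hyperbolic orbits homoclinically related to $p$, both of period at most $2\pi(q)$ (Lemma~\ref{Lemma22}); and (iii) a careful count of $P_N(g)$, the periodic points of period $\le N$ in the class, yielding a strict inequality $\sharp P_N(g')\ge \sharp P_N(g)+1$ that \emph{is} a conjugacy invariant (Proposition~\ref{Proposition21}). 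Your appeal to Pliss is also misplaced here: if item (2) fails for all constants you already have orbits with near-neutral eigenvalues, and the real issue is the gap between norm products $\|Df^m|_{E^s}\|$ and eigenvalue moduli, which is bridged not by Pliss but by the linear-cocycle formalism.

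Second, your claim that ``(1) follows from (2)'' is false as stated. Uniform contraction on $E^s$ and expansion on $E^u$ at the periods does not by itself give a dominated splitting on the closure: the angle between $E^s$ and $E^u$ could degenerate along a sequence of orbits, destroying domination. What does give both is the ``star'' property of the periodic linear cocycle $Df\colon T_\Lambda M\to T_\Lambda M$, which via the Liao--Ma\~n\'e theorem (Theorem~\ref{PLC}) simultaneously produces the per-point domination $\|Df^m|_{E^s(q)}\|\cdot\|Df^{-m}|_{E^u(f^mq)}\|<\lambda$ \emph{and} the period estimates in item (2). The paper's logical order is therefore: Proposition~\ref{Proposition21} (robustly no weak eigenvalues) $\Rightarrow$ $Df$ is a star cocycle (verified via the path argument and Gourmelon's Franks-type lemma, Proposition~\ref{Gourmelon}, which is exactly the localization-preserving-homoclinic-relation tool you correctly sense you need but do not supply) $\Rightarrow$ both (1) and (2) at once by Theorem~\ref{PLC}. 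Your proposed order (2) $\Rightarrow$ (1) reverses a step that does not reverse.
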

Here we recall that an invariant set $\Lambda$ of $f$ admits a
dominated splitting if the tangent bundle $T_\Lambda M$ has a
continuous $Df$-invariant splitting $E\oplus F$ and there exists
constants $C>0$, $0<\lambda<1$ such that
$$\|Df^n|_{E(x)}\|/m(Df^n|_{F(x)})\leq C\lambda^n$$
for all $x\in\Lambda$ and $n\geq0$. Where
$$m(A)=\inf\{\|Av\|: \|v\|=1\}$$ denotes the {\it mininorm} of a linear map
$A$. The dominated splitting is a generalization (or a candidate) of
hyperbolic splitting. With some assumptions, we can prove that the
dominated splitting given in Theorem \ref{Mainthm1} is actually
hyperbolic.

\begin{Theorem}\label{Mainthm2}
Let $f$ be a diffeomorphism of $M$ and $p$ be a hyperbolic periodic
point of $f$ of index $1$ or ${\rm dim} M-1$. If the homoclinic
class $H(p,f)$ of $p$ is structurally stable, then $H(p,f)$ is
hyperbolic.
\end{Theorem}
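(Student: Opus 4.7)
My plan is to promote the dominated splitting $T_{H(p,f)} M = E \oplus F$ given by Theorem \ref{Mainthm1} (with $\dim E = {\rm ind}(p)$) to a hyperbolic splitting, using crucially that in the codimension-one setting $\min(\dim E, \dim F) = 1$. First, structural stability of $H(p,f)$ for $f$ passes to structural stability of $H(p, f^{-1}) = H(p,f)$ for $f^{-1}$, since the conjugating homeomorphisms also intertwine $f^{-1}$ with $g^{-1}$; applying Theorem \ref{Mainthm1} to both $f$ and $f^{-1}$ then yields, along every periodic orbit $q \sim p$, uniform exponential contraction along $E|_q = E^s(q)$ and uniform exponential expansion along $F|_q = E^u(q)$ (the identification uses uniqueness of a dominated splitting whose index matches the periodic orbit). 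After possibly exchanging $f$ with $f^{-1}$, I may assume $\dim E = 1$.

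Next I would upgrade these periodic estimates to the whole class, one bundle at a time. For the one-dimensional bundle $E$, suppose for contradiction that $E$ is not uniformly contracted; then ergodic decomposition furnishes an $f$-ergodic measure $\mu$ on $H(p,f)$ whose Lyapunov exponent along $E$ is nonnegative. Ma\~n\'e's ergodic closing lemma then gives, for $g$ arbitrarily $C^1$-close to $f$, a periodic orbit ${\rm Orb}(q_g)$ of $g$ whose Lyapunov exponent along the continuation of $E$ is nonnegative (up to a small error) and whose orbit is Hausdorff-close to ${\rm supp}(\mu) \subset H(p,f)$. A $\lambda$-lemma argument, using density of periodic points homoclinically related to $p$ in ${\rm supp}(\mu)$ and the $C^1$-robustness of transverse intersections, forces $q_g \sim p_g$, so $q_g \in H(p_g, g)$. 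Since $H(p_g, g)$ is itself structurally stable (composing the conjugating homeomorphisms for $f$-to-$g$ and $f$-to-$g'$), Theorem \ref{Mainthm1} applies to it and contradicts the nonnegative exponent of $q_g$. For the bundle $F$: if $\dim F = 1$ the argument is symmetric via $f^{-1}$, and if $\dim F = \dim M - 1$ the same scheme works, extracting an ergodic measure with a nonpositive $F$-exponent and producing a perturbed periodic orbit $q_g \in H(p_g, g)$ that contradicts Theorem \ref{Mainthm1} applied to $g$ (possibly after an additional Franks-type perturbation along ${\rm Orb}(q_g)$ to directly pin down its weakest unstable eigenvalue). Combining the two steps, the dominated splitting is in fact hyperbolic.

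\textbf{Main obstacle.} I expect the principal difficulty to be ensuring that the periodic orbit $q_g$ produced by the ergodic closing lemma actually lies in $H(p_g, g)$, rather than being merely Hausdorff-close to $H(p,f)$. This is where the codimension-one hypothesis enters in an essential way: the one-dimensionality of at least one of the bundles lets a single Franks-type perturbation alter the relevant eigenvalue while the $\lambda$-lemma preserves the transverse heteroclinic intersections with ${\rm Orb}(p_g)$ that place $q_g$ in the homoclinic class. In higher codimension, perturbing the weakest eigenvalue typically disturbs several directions at once and the required transverse intersections need not survive, which is why the argument does not extend beyond index $1$ or $\dim M - 1$ without further ideas.
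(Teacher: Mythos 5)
Your approach differs fundamentally from the paper's, and it has a genuine gap precisely where you flag the ``main obstacle'' --- but that obstacle is not resolved by the sketch you give, and it is in fact the entire difficulty the paper's argument is constructed to avoid.

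The paper's route is: (i) apply Theorem \ref{Mainthm1} to get the dominated splitting and the Pliss-type rate estimates on periodic orbits; (ii) show that for a \emph{residual} set of nearby $g$, the class $H(p_g,g)$ is hyperbolic (Proposition \ref{Prop31}, using the one-dimensionality of $E$, the machinery of minimally non-contracting sets from \cite{WW}, \cite{Cro2}, and the criterion of \cite{BGY}); (iii) observe that the topological conjugacy between $H(p,f)$ and the hyperbolic, hence locally maximal, class $H(p_g,g)$ transports the \emph{shadowing property with periodic shadowing} back to $H(p,f)$; (iv) combine shadowing with the rate estimates in Proposition \ref{Prop41} to get hyperbolicity of $H(p,f)$ directly, by concatenating a ``good'' periodic orbit with a segment of a ``bad'' orbit into a periodic pseudo-orbit, shadowing it by a periodic orbit that is automatically \emph{inside} $H(p,f)$ and homoclinically related to $p$, and deriving a contradiction with $(P2)$. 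The critical point is that periodic shadowing inside a structurally stable class produces periodic orbits that lie in the class by construction; no $\lambda$-lemma step is needed to place them there.

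Your plan replaces steps (ii)--(iv) with the ergodic closing lemma plus a $\lambda$-lemma argument, but the step ``a $\lambda$-lemma argument \dots forces $q_g \sim p_g$'' is exactly where the argument breaks, and the codimension-one hypothesis does not repair it in the way you suggest. The periodic orbit $q_g$ produced by Ma\~n\'e's ergodic closing lemma is engineered to have Lyapunov exponent along $E$ near zero; consequently its invariant manifolds along the $E$-direction may be arbitrarily thin, and there is no a priori lower bound on their size that would guarantee transverse heteroclinic intersections with $W^{u}({\rm Orb}(p_g))$ and $W^{s}({\rm Orb}(p_g))$. Density of periodic points of $H(p,f)$ in ${\rm supp}(\mu)$ does not transfer those intersections to $q_g$. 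A Franks-type perturbation along ${\rm Orb}(q_g)$ can adjust its weakest eigenvalue, but to keep the heteroclinic intersections you first need them to exist with a uniform transversality margin, which is precisely what a near-zero exponent undermines. Note also that the paper's Proposition \ref{Prop31} is a \emph{generic} statement, and the subsequent lemma and Proposition \ref{Prop41} are exactly the extra work needed to remove the genericity assumption; your proposal neither invokes genericity nor supplies a substitute for this step, so even if the ECL orbit could be placed in $H(p_g,g)$ you would still owe an argument for non-generic $f$. If you want to keep your route, you should prove that $q_g$ can be chosen with uniform stable/unstable manifold size (e.g.\ by selecting Pliss points and using the dominated splitting), and then carefully show how these manifolds are forced to meet $W^{u/s}({\rm Orb}(p_g))$ transversally; as written, that step is asserted rather than argued.
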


Recall that a diffeomorphism $f$ is called {\it far away from
homoclinic tangency} if there is a neighborhood $\mathcal{U}$ of $f$
such that for any $g\in\mathcal{U}$ and any hyperbolic periodic
points $q$ of $g$, there is no non-transversely intersection of
$W^s(q)$ and $W^u(q)$.

\begin{Theorem}\label{Mainthm3}
Let $f$ be a diffeomorphism of $M$ which is far away from homoclinic
tangency and $p$ be a hyperbolic periodic point of $f$. If the
homoclinic class $H(p,f)$ of $p$ is structurally stable, then
$H(p,f)$ is hyperbolic.
\end{Theorem}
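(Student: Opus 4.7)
The plan is to upgrade the dominated splitting and periodic estimates supplied by Theorem \ref{Mainthm1} to uniform hyperbolicity, exploiting the far-from-tangency hypothesis to ensure that any central direction obstructing hyperbolicity is one-dimensional, and then using structural stability to rule out even those.

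\textbf{Step 1 (initialization).} I would first apply Theorem \ref{Mainthm1} to $H(p,f)$ to obtain the dominated splitting $T_{H(p,f)}M = E \oplus F$ with $\dim E = \mathrm{ind}(p)$, together with the uniform periodic contraction on $E$ and expansion on $F$ along every $q \sim p$. The goal then becomes upgrading these periodic estimates to genuine uniform contraction on $E$ and expansion on $F$ over all of $H(p,f)$, for some iterate $f^m$.

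\textbf{Step 2 (refining the splitting).} Using the hypothesis that $f$ is far from homoclinic tangency, I would invoke a structure theorem in the spirit of Wen (and Crovisier--Pujals in related forms): for such $f$, the dominated splitting on a homoclinic class can be refined to $T_{H(p,f)}M = E_1 \oplus \cdots \oplus E_\ell$ in which every sub-bundle of dimension at least two is automatically uniformly hyperbolic. The only remaining obstruction to full hyperbolicity is therefore a collection of one-dimensional central sub-bundles, sitting either inside $E$ (as the least contracted piece) or inside $F$ (as the least expanded piece). If no such central bundle appears, the result is immediate.

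\textbf{Step 3 (ruling out central directions).} Suppose, for contradiction, that a one-dimensional central sub-bundle $E_c$ persists, say sitting inside $E$. Since $E_c$ is dominated by everything above it but not uniformly contracted, a Pliss-type argument produces points $x \in H(p,f)$ at which $\|Df^n|_{E_c(x)}\|$ decays arbitrarily slowly. Combining Ma\~n\'e's ergodic closing lemma with a $C^1$-small perturbation (followed by a connecting-lemma step to make the created orbit homoclinically related to $p_g$) yields, for some $g$ arbitrarily close to $f$, a periodic orbit $q_g \in H(p_g,g)$ whose periodic contraction rate along the continuation of $E_c$ violates the uniform estimate of Theorem \ref{Mainthm1}(2). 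Transporting $q_g$ back through the topological conjugacy furnished by structural stability returns a periodic point of $H(p,f)$ that violates the same estimate, a contradiction. The symmetric argument handles a central $1$-bundle inside $F$, and hyperbolicity follows from Step 2.

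\textbf{Main obstacle.} The principal difficulty lies in Step 3, namely in ensuring that the periodic orbit produced by the ergodic closing lemma can be placed inside the homoclinic class $H(p_g,g)$, so that the structural stability conjugacy pulls it back into $H(p,f)$. This requires a careful connecting-lemma step on top of the perturbation, together with repeated use of the far-from-tangency hypothesis both to preserve the dominated splitting under perturbation and to force all nearby periodic orbits to share the index $\mathrm{ind}(p)$, so that they enter the class of the continuation of $p$ rather than splitting off into a different homoclinic class.
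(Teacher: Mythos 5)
Your Step 3 contains a fatal gap: the conjugacy $h : H(p,f)\to H(p_g,g)$ supplied by structural stability is only a homeomorphism, so after producing a periodic orbit of $g$ that violates the periodic estimate, you cannot ``transport it back through the topological conjugacy'' to obtain a periodic orbit of $f$ violating the same estimate. A homeomorphism preserves period and orbit structure but carries no information about derivatives or eigenvalues; the paper itself flags exactly this limitation in the remark following the definition of structural stability. So the contradiction you aim for never materializes. Moreover, your identified ``main obstacle'' --- forcing the orbit produced by the ergodic closing lemma into the correct homoclinic class so that it is homoclinically related to $p_g$ --- is a genuine difficulty that the proposal acknowledges but does not resolve; the ergodic closing lemma does not by itself deliver homoclinic relation to a given periodic point, and the added ``connecting-lemma step'' would require careful work under perturbation while preserving the weak exponent.

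The paper takes a route that sidesteps both problems. First, combining the Crovisier--Sambarino--Yang dichotomy with Theorem \ref{Mainthm1} shows that for a residual set of $g$ near $f$ (far from tangency), $H(p_g,g)$ is actually hyperbolic. Then --- and this is the key idea you are missing --- the conjugacy $h$ \emph{is} used, but only to transport a purely topological property: since $H(p_g,g)$ is hyperbolic it has the shadowing property (with periodic pseudo-orbits shadowed by periodic orbits), and a topological conjugacy does preserve shadowing, so $H(p,f)$ inherits it. This gives property $(P3)$ in Proposition \ref{Prop41}. Finally, assuming $E$ is not contracting, the paper uses a ``bad'' point $b\in H(p,f)$ together with a ``good'' periodic point to build a periodic $\delta$-pseudo-orbit \emph{directly in} $H(p,f)$ whose $E$-Birkhoff average along a long block is close to $1$; shadowing by a genuine periodic orbit of $f$ in $H(p,f)$ then produces a periodic point homoclinically related to $p$ whose contraction rate along $E^s$ violates the estimate of Theorem \ref{Mainthm1}$(2)$, with no perturbation and no appeal to the ergodic closing lemma. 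If you wish to salvage your approach, you would need to replace the ``transport back'' step with a direct argument that the estimates of Theorem \ref{Mainthm1} already hold robustly for $g$ near $f$ (which is true, via Proposition \ref{Proposition21}), and then solve the homoclinic-relation problem for the closing-lemma orbit; but the paper's shadowing argument is considerably cleaner.
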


\section{Periodic points in $H(p,f)$}
In this section we give the proof of Theorem \ref{Mainthm1}. To
prove Theorem \ref{Mainthm1} we need consider a kind of typical
diffeomorphisms with some generic properties.

\begin{Proposition}\label{Generic} There is a residual
subset $\mathcal{R}_0\subset {\rm Diff}(M)$ such that every
$f\in\mathcal{R}_0$ satisfies the following conditions:
\begin{itemize}
\item [1.] f is Kupka-Smale, meaning periodic points of $f$ are each hyperbolic
and their stable and unstable manifolds meet transversally $($see
\cite{PM}).

\item [2.] for any pair of
hyperbolic periodic points $p$ and $q$ of $f$, either $H(p, f) =
H(q, f)$ or $H(p, f)\cap H(q, f) =\emptyset$.(see \cite{BC})

\item [3.] if two hyperbolic periodic points $p$ and $q$ of $f$ are in the same
topologically transitive set and ${\rm Ind}(p)\leq{\rm Ind}(q)$,
then $W^s({\rm Orb}(q), f)\pitchfork W^u({\rm Orb}(p), f)$ $\neq
\emptyset$ (see \cite{GW}).

\item [4.] for every pair of periodic
points $p_f$ and $q_f$ of $f$, there exists a neighborhood
$\mathcal{U}$ of $f$ in $\mathcal{R}_0$ such that either
$H(p_g,g)=H(q_g,g)$ for all $g\in\mathcal{U}$ or $H(p_g,g)\cap
H(q_g,g)=\emptyset$ for all $g\in \mathcal{U}$(see \cite{ABCDW}).
\end{itemize}
\end{Proposition}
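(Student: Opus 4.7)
The plan is to obtain $\mathcal{R}_0$ as the intersection of four residual subsets of $\mathrm{Diff}(M)$, one for each listed item, and to invoke the Baire category theorem to conclude that the intersection is still residual. Since each of the four items is an established genericity theorem cited directly in the statement, the work reduces to identifying the appropriate residual subset for each item and verifying that the countable intersection is well-behaved.

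First I would take $\mathcal{R}_1$ to be the classical Kupka--Smale residual subset from \cite{PM}, which simultaneously ensures hyperbolicity of all periodic points and transversality of their stable and unstable manifolds. Next, $\mathcal{R}_2$ would be the residual subset furnished by Bonatti--Crovisier \cite{BC} on which two homoclinic classes either coincide or are disjoint. For item 3, I would invoke Gan--Wen \cite{GW} to produce a residual $\mathcal{R}_3$ on which topological transitivity on a common invariant set forces a transverse intersection between the invariant manifolds of periodic points of comparable index. Finally, for item 4, the paper \cite{ABCDW} supplies a residual $\mathcal{R}_4$ on which the coincidence or disjointness of homoclinic classes of continuations is locally constant in $f$.

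Setting $\mathcal{R}_0=\mathcal{R}_1\cap\mathcal{R}_2\cap\mathcal{R}_3\cap\mathcal{R}_4$ then gives a residual subset of $\mathrm{Diff}(M)$, since $\mathrm{Diff}(M)$ with the $C^1$ topology is a Baire space and a countable intersection of residual sets is residual. The one technical point to flag is item 4, whose statement quantifies over all pairs of periodic points of $f$; since the periodic points of any $f\in\mathcal{R}_1$ form a countable set (each hyperbolic orbit is isolated, and one enumerates by period), this clause is in fact a countable intersection, over pairs of hyperbolic periodic orbits up to continuation, of the local-constancy condition proved in \cite{ABCDW}, which is still residual. I do not anticipate any serious obstacle: the proposition is essentially a bookkeeping consolidation of known generic results, and the only subtle step is this countability reduction for item 4, which is handled automatically once we work inside the Kupka--Smale residual $\mathcal{R}_1$.
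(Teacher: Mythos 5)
Your proposal is correct and is essentially the argument the paper implicitly relies on: the paper states Proposition~\ref{Generic} without a written proof, merely citing \cite{PM}, \cite{BC}, \cite{GW}, and \cite{ABCDW} for the four items, so the expected justification is exactly your intersection-of-residual-sets argument with the countability remark for item~4. No discrepancy with the paper's approach.
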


In the proof we also need the following version of Frank's Lemma
preserving (un)stable manifolds.

\begin{Proposition}\label{Gourmelon} (\cite{Gou}) \ Let $f$ be a diffeomorphism of $M$.
For any $C^1$ neighborhood ${\mathcal U}$ of $f$, there is
$\epsilon>0$ such that, for any pair of hyperbolic periodic points
$p, q\in M$ of $f$ that are homoclinically related, any neighborhood
$U$ of  ${\rm Orb}(q)$ in $M$ not touching ${\rm Orb}(p)$, and any
continuous path of linear isomorphisms $A_{k, t}: T_{f^k q}M\to
T_{f^{k+1} q}M$ that satisfies the following three assumptions:

$(1)$  $A_{k, 0}=D_{f^kq}f$ for all $0\leq k<\pi(q)$,

$(2)$ $\|A_{k, t}- D_{f^k(q)}f\|<\epsilon$ for all $0\leq k<\pi(q)$
and any $t\in[0, 1]$,

$(3)$ $A_{\pi(q)-1, t}\circ A_{\pi(q)-2, t}\circ\cdots\circ A_{0,
t}$ has no eigenvalue on the unit circle for all $t\in[0, 1]$,

there exist a perturbation $g\in\mathcal{U}$ with the following
three properties:

$(A)$  $g=f$ on  $(M\backslash U)\cup{\rm Orb}(q)$,

$(B)$  $D_{f^kq}g=A_{k,1}$ for all $0\leq k<\pi(q)$,

$(C)$  $p$ and $q$ are homoclinically related with respect to $g$.

\end{Proposition}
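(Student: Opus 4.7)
The plan is to establish Gourmelon's statement by discretizing the continuous path $t\mapsto A_{k,t}$ into a finite sequence of small steps, at each of which the classical Franks' lemma is applied, while using hypothesis (3) to carry the homoclinic relation between $p$ and $q$ continuously along the path.

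First, using compactness of $[0,1]$ and uniform continuity of the path $t\mapsto (A_{0,t},\dots,A_{\pi(q)-1,t})$, I would fix a partition $0=t_0<t_1<\dots<t_N=1$ fine enough that each consecutive difference $\|A_{k,t_{i+1}}-A_{k,t_i}\|$ falls below a threshold $\epsilon_0$ chosen so that one application of the classical Franks' lemma produces a perturbation in $\mathcal{U}$ supported in $U$. This yields a sequence $f=g_0,g_1,\dots,g_N=g$ with $D_{f^k q}g_i=A_{k,t_i}$, each $g_i$ agreeing with $f$ on $(M\setminus U)\cup\mathrm{Orb}(q)$, which realizes (A) and (B). The issue left unresolved by classical Franks is conclusion (C): whether $p$ and $q$ remain homoclinically related for $g$.

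Second, hypothesis (3) says that the composed cocycle $A(t):=A_{\pi(q)-1,t}\circ\cdots\circ A_{0,t}$ is hyperbolic for every $t\in[0,1]$, so the stable and unstable subspaces $E^s(t),E^u(t)\subset T_qM$ vary continuously in $t$, and the Hadamard-Perron construction produces local invariant manifolds $W^s_{\mathrm{loc}}(q,g_i)$ and $W^u_{\mathrm{loc}}(q,g_i)$ depending continuously on $i$ in the $C^1$ topology, as long as the partition is fine enough. Fixing a transverse intersection point $x\in W^u(\mathrm{Orb}(p),f)\pitchfork W^s(\mathrm{Orb}(q),f)$, the $\lambda$-lemma realizes $x$ as a transverse intersection of $W^u(\mathrm{Orb}(p),f)$ with a sufficiently large backward iterate of $W^s_{\mathrm{loc}}(q,f)$. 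Since $g_i=f$ outside $U$ and the local stable manifold varies $C^1$-continuously, the transverse intersection persists for $g_i$ at a nearby point $x_i$; the symmetric argument works for a point of $W^s(\mathrm{Orb}(p),f)\pitchfork W^u(\mathrm{Orb}(q),f)$. Propagating this persistence inductively from $g_{i-1}$ to $g_i$ yields (C) for $g=g_N$.

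The main obstacle, and the heart of Gourmelon's refinement, is making the preceding continuity claim for $W^{s/u}_{\mathrm{loc}}(q,g_i)$ genuinely correct. A naive application of Franks' lemma can rotate the local invariant manifolds in an uncontrolled way, which could destroy the transverse intersection even for arbitrarily small discrete perturbations. Gourmelon's key technical device is to select, at each periodic orbit point $f^k q$, a local chart and Riemannian metric adapted to the continuously varying splitting $E^s(t)\oplus E^u(t)$, so that the chart-level perturbation realizing $A_{k,t_{i+1}}$ rotates the local stable and unstable subbundles coherently with the path $A(t)$ rather than transversally to it. With this adapted-metric construction, the local invariant manifolds at each stage of the induction are $C^1$-close to those of the previous stage, so the transverse homoclinic intersections between $p$ and $q$ persist throughout, completing the proof.
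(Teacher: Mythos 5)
The paper does not prove this Proposition at all: it is quoted verbatim from Gourmelon's preprint \cite{Gou}, so there is no in-paper argument to compare yours against. Judged on its own terms, your sketch has the right general shape (discretize the path $A_{k,t}$, realize each step by a Franks-type perturbation supported in $U$, and use the hyperbolicity of the cocycle along the whole path to drag the invariant manifolds --- and hence the transverse homoclinic intersections with ${\rm Orb}(p)$ --- from one step to the next), and this is indeed the spirit of Gourmelon's argument. But the central step is asserted rather than proved, and your own third paragraph concedes as much before resolving the difficulty with a single sentence that essentially restates the theorem.

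Concretely, the gap is the claim that $W^{s}_{\rm loc}(q,g_i)$ of a \emph{fixed} size varies $C^1$-continuously along the sequence $g_0,\dots,g_N$. The Hadamard--Perron/graph-transform continuity you invoke gives a local manifold whose size is governed by the modulus of continuity of $Dg_{i-1}$ near ${\rm Orb}(q)$, not merely by the (uniformly hyperbolic) derivative cocycle; a Franks-type perturbation is precisely one whose derivative oscillates inside an arbitrarily small support, so the controlled size of the invariant manifold can shrink to the scale of that support, with no lower bound. This has two consequences your induction does not handle: (i) the admissible step size $t_{i+1}-t_i$ at stage $i$ depends on $g_{i-1}$, which itself depends on the earlier choices, so the partition cannot be fixed in advance and the greedy process is not shown to reach $t=1$; (ii) as the controlled local size shrinks, reconnecting to the homoclinic point $x$ requires ever more backward iterates through the perturbed region, and the transversality angle at $x_i$ is not shown to stay bounded away from zero over the $N$ stages. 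There is also the uniformity issue: the $\epsilon$ of the statement depends only on $\mathcal U$, not on $q$, $\pi(q)$, or the path, whereas your construction accumulates $N$ perturbations and bounds distances only step-to-step. The ``adapted chart and metric'' device you name is exactly the technical content of \cite{Gou} (a perturbation built to preserve prescribed cone fields and compact pieces of the invariant manifolds); naming it is not constructing it, so as written the proposal presupposes the result it sets out to prove.
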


Let $q$ be a hyperbolic periodic point of a diffeomorphism $f$. We
say an eigenvalue $\mu$ of $D_qf^{\pi(q)}$ is the {\it weakest
stable eigenvalue } of $q$ if $\log|\mu'|\leq\log|\mu|<0$ for all
eigenvalues $\mu'$ of $D_qf^{\pi(q)}$ with $|\mu'|<1$. Similarly we
define the weakest unstable eigenvalue. We say
$\sqrt[\pi(q)]{|\mu|}$ is the {\it normalized weakest stable
eigenvalue } of $q$. We also say an eigenvalue is simple if it has
multiplicity $1$. Applying the above proposition, we can get the
following lemma.

\begin{Lemma}
Let $f$ be a diffeomorphism of $M$ and $q\sim p$ be hyperbolic
periodic points of $f$. Let $\mu$ be a weakest stable eigenvalue of
$q$ w.r.t $g$. Then there exists a diffeomorphism $g$ arbitrarily
close to $f$ with a hyperbolic periodic point $q'\sim p_g$ such that
the weakest stable eigenvalue $\mu'$ of $q'$ (w.r.t $g$) is real and
$\sqrt[\pi(q')]{|\mu'|}$ is arbitrarily close to
$\sqrt[\pi(q)]{|\mu|}$.
\end{Lemma}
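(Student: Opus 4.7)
The plan is to reduce the complex case to one invocation of Proposition \ref{Gourmelon}. If $\mu$ is already real, take $g=f$ and $q'=q$; there is nothing to do. Assume then $\mu=re^{i\theta}$ with $\theta\notin\pi\mathbb{Z}$, so $\mu,\bar\mu$ is a genuine complex conjugate pair with associated $Df$-invariant real 2-plane bundle $L$ over $\mathrm{Orb}(q)$. To turn this pair into a real weakest stable eigenvalue of nearly the same modulus, one needs to unwind a total rotation of size at most $\pi$ in $L$, and by the hypotheses of Proposition \ref{Gourmelon} this rotation must be distributed over the orbit as per-step perturbations each of size less than the prescribed $\epsilon>0$. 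If $\pi(q)$ is small this cannot be done at $q$ directly, which is the main obstacle.

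I would circumvent this by first replacing $q$ with a homoclinically related periodic point $q_N$ of much larger period. Since $q\sim p$, the Birkhoff--Smale horseshoe at a transverse homoclinic intersection yields, for every large $N$, a hyperbolic periodic point $q_N\sim p$ whose orbit shadows $\mathrm{Orb}(q)$ for $N$ full periods and then makes one bounded excursion through $\mathrm{Orb}(p)$; thus $\pi(q_N)=N\pi(q)+c$ with $c$ bounded independently of $N$. The period map $D_{q_N}f^{\pi(q_N)}$ factors, up to shadowing error, as $T\circ(D_qf^{\pi(q)})^N$ where $T$ is a transition factor whose norm is bounded in $N$. Consequently $q_N$ has a weakest stable eigenvalue pair $\mu_N,\bar\mu_N$ with $\log|\mu_N|=N\log r+O(1)$ and $\arg\mu_N=N\theta+O(1)\pmod{2\pi}$, so $\sqrt[\pi(q_N)]{|\mu_N|}\to r^{1/\pi(q)}=\sqrt[\pi(q)]{|\mu|}$ as $N\to\infty$; and the rotation to unwind, being at most $\pi$, is now spread over $\pi(q_N)\gtrsim N\pi(q)$ orbit points.

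With $q_N$ fixed and $N$ large, I apply Proposition \ref{Gourmelon} to the continuous path $A_{k,t}$ obtained from $D_{f^kq_N}f$ by post-composing with a rotation of angle $-t\alpha/\pi(q_N)$ in the 2-plane $L_{f^kq_N}$ (identity on a complementary subspace), where $\alpha:=\arg\mu_N\in(-\pi,\pi]$. For large $N$ each step is $O(1/N)$-close to $D_{f^kq_N}f$, so hypothesis (2) of the proposition is met; the composed map leaves $L_0$ invariant and its eigenvalues there move continuously from $\mu_N,\bar\mu_N$ to a real pair of modulus close to $|\mu_N|$, staying strictly inside the unit circle, while the other eigenvalues are only slightly perturbed, so hypothesis (3) also holds. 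Choosing the supporting neighborhood $U$ of $\mathrm{Orb}(q_N)$ disjoint from $\mathrm{Orb}(p)$ forces $p_g=p$, and the proposition delivers $g$ arbitrarily close to $f$ for which $q':=q_N$ is still homoclinically related to $p_g$ and has real weakest stable eigenvalue $\mu'$ of modulus close to $|\mu_N|$, hence $\sqrt[\pi(q')]{|\mu'|}$ close to $\sqrt[\pi(q)]{|\mu|}$.

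The principal technical subtlety is the ``distributed rotation'' estimate: because the rotations in the distinct planes $L_{f^k q_N}$ do not commute with the corresponding maps $D_{f^k q_N}f$, one must verify that a uniform per-step rotation of $\alpha/\pi(q_N)$ produces the intended net rotation in the period map up to an error vanishing with $N$. A first-order analysis shows that the commutator contributions are $O(1/N)$, which is acceptable both for controlling $\|A_{k,t}-D_{f^kq_N}f\|$ and for keeping $|\mu'|$ close to $|\mu_N|$; checking this estimate carefully, and confirming that the composed cocycle remains hyperbolic along the entire path, is where the bulk of the work lies.
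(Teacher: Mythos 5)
Your overall strategy is the same as the paper's: (i) replace $q$ by a homoclinically related periodic point $q'$ of arbitrarily large period whose normalized weakest stable eigenvalue stays close to $\sqrt[\pi(q)]{|\mu|}$; (ii) build a continuous path of per\-step perturbations of the derivative cocycle over $\mathrm{Orb}(q')$ that drives the complex eigenvalue pair onto the real axis; (iii) realize the path by a diffeomorphism via Proposition~\ref{Gourmelon} while keeping the homoclinic relation to $p$. For (i) the paper applies the shadowing lemma to the hyperbolic set $\mathrm{Orb}(p)\cup\mathrm{Orb}(x)\cup\mathrm{Orb}(q)\cup\mathrm{Orb}(y)$; your Birkhoff--Smale horseshoe at a transverse homoclinic point of $q$ is an interchangeable way to obtain the same $q_N$. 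For (ii) the paper simply cites Lemma~6.6 of \cite{BC}, whereas you attempt the construction directly by uniformly distributing a rotation of angle $\alpha/\pi(q_N)$ over the orbit.

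The gap is in how you justify (ii). You assert that ``the commutator contributions are $O(1/N)$.'' Each individual commutator $\bigl[D_{f^kq_N}f,\,R_{-\alpha/\pi(q_N)}\bigr]$ is indeed $O(1/N)$, but there are $\Theta(N)$ of them, so their accumulated effect on the period map is a priori $O(1)$, not $O(1/N)$: the product $(R_\beta M_{\pi-1})\cdots(R_\beta M_0)$ need not be $o(1)$-close to $R_{\pi\beta}\,M_{\pi-1}\cdots M_0$ when the $M_k$ fail to be conformal, and in particular the net angular displacement of the eigenvalue argument is not simply $-\alpha$. Neither of the two conclusions you attach to the estimate actually needs it: the per-step distance $\|A_{k,t}-D_{f^kq_N}f\|=O(1/N)$ is immediate from the size of the rotation, and the modulus $|\mu'|$ is controlled because post-composition with rotations preserves the determinant restricted to $L$, so at the parameter where the discriminant vanishes the two eigenvalues coincide and equal $\pm|\mu_N|$. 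What does need justification is the existence, for some $t\le 1$, of a parameter at which the discriminant crosses zero, and this is precisely what Lemma~6.6 of \cite{BC} provides via a winding/intermediate-value argument on the eigenvalue argument rather than a commutator bound. With that lemma invoked (as the paper does) in place of your first-order commutator claim, your plan goes through and coincides with the paper's proof.
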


\begin{proof}
Since $q\sim p$, one can find points $x\in W^s(p)\pitchfork W^u(q)$
and $y\in W^u(p)\pitchfork W^s(q)$. It follows that ${\rm
Orb}(p)\cup {\rm Orb}(x)\cup {\rm Orb(q)}\cup {\rm Orb}(y)$ is a
hyperbolic set. Applying the shadowing lemma of hyperbolic set, one
can find a hyperbolic periodic point $q'$ of $f$ such that the
period of $q'$ is arbitrarily large and the normalized weakest
stable eigenvalue of $q'$ is arbitrarily close to
$\sqrt[\pi(q)]{|\mu|}$. Without loss of generality, we assume that
the weakest stable eigenvalue of $q'$ is complex. For any
$\varepsilon>0$, by choosing a suitable $q'$ with period large
enough, using the Lemma 6.6 of \cite{BC}, one can get continuous
paths of linear isomorphisms $A_{k, t}: T_{f^k q'}M\to T_{f^{k+1}
q'}M (k=0,1,\cdots, \pi(q')-1)$ with the following properties:

$(1)$  $A_{k, 0}=D_{f^kq'}f$ for all $0\leq k<\pi(q')$,

$(2)$ $\|A_{k, t}- D_{f^k(q')}f\|<\epsilon$ for all $0\leq
k<\pi(q')$ and any $t\in[0, 1]$,

$(3)$ $A_{\pi(q')-1, t}\circ A_{\pi(q')-2, t}\circ\cdots\circ A_{0,
t}$ has no eigenvalue on the unit circle for all $t\in[0, 1]$,

$(4)$ The weakest stable eigenvalue of $A_{\pi(q')-1, 1}\circ
A_{\pi(q')-2, 1}\circ\cdots\circ A_{0, 1}$ are real and simple and
the normalization of the eigenvalue equal to the normalized weakest
stable eigenvalue of $q'$.

Then one can apply the Proposition \ref{Gourmelon} to get the
perturbation $g$ in the lemma.
\end{proof}

Let $p$ be a hyperbolic periodic point of $f$. If $q\in H(p,f)$ has
a simple
 real stable weakest eigenvalue $\lambda$, then we know that there is a
strong stable manifold $W_{loc}^{ss}(q)$ of codimension one in
$W_{loc}^s(q)$. If we choose $loc$ small enough, then
$W_{loc}^s(q)\backslash W_{loc}^{ss}(q)$ has two components $B_1$
and $B_2$. We say that $q$ is a {\it non-ss boundary point} if
 $W^u(p)$ intersect both $B_1$ and $B_2$. The following lemma can be
 found in \cite{SV}. Here we also give a proof for completeness.

\begin{Lemma} \label{Lemma22}
Let $f\in {\rm Diff}(M)$ with a neighborhood $\mathcal{U}\subset
{\rm Diff}(M)$ and $p$ be a hyperbolic periodic point of $f$. There
exist a constant $\delta>0$ such that for any $q\in H(p,f)$, if $q$
has a simple real weakest stable eigenvalue $\lambda$ with
$(1-\delta)^{\pi(q)}<|\lambda|<1$ and $q$ is a non-ss boundary
point, then for any neighborhood $U$ of ${\rm Orb}(q)$, there exist
a diffemorphism $g\in\mathcal{U}$ such that

$1$. $g=f$ in $M\backslash U$,

$2$. there exist two hyperbolic periodic points $q_1,q_2$ of $g$
which are homoclinic related to $p$ such that their orbits contained
in $U$ and $\pi(q_1)=\pi(q_2)$. Furthermore, the periods of
$q_1,q_2$ are either $\pi(q)$ or $2\pi(q)$.

\end{Lemma}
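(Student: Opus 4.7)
The plan is to create the two periodic orbits $q_1,q_2$ by a local bifurcation in the one-dimensional weakest-stable direction at $q$, made possible by the weak-contraction hypothesis $(1-\delta)^{\pi(q)}<|\lambda|<1$ with $\delta$ small. The non-ss boundary condition furnishes transverse intersection points $y_1\in W^u(p)\cap B_1$ and $y_2\in W^u(p)\cap B_2$, one on each side of $W^{ss}_{loc}(q)$ inside $W^s_{loc}(q)$, and these will be recycled at the end to produce the homoclinic relation between the $q_i$ and $p_g$.

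First I would normalize the local picture in a tubular neighborhood of $\mathrm{Orb}(q)$ contained in $U$, decomposing the tangent bundle along the orbit as $E^{ss}\oplus E^{ws}\oplus E^u$, where $E^{ws}$ is the one-dimensional weak stable line associated to $\lambda$. Because the normalized contraction $|\lambda|^{1/\pi(q)}$ is at least $1-\delta$, the multiplier in $E^{ws}$ can be moved an appreciable amount at a $C^1$ cost comparable to $\delta$. I would then apply Proposition \ref{Gourmelon} along $\mathrm{Orb}(q)$ to push the weakest stable eigenvalue to a value $\lambda'$ of the same sign as $\lambda$ and even closer to $\pm 1$, staying hyperbolic throughout the homotopy so assumption (3) of that proposition holds; by its conclusion (C) the homoclinic relation $p\sim q$ persists for this intermediate perturbation, and $q$ is now extremely weakly contracting in $E^{ws}$.

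Next I would add a second, strictly local, $C^1$-small bump supported in a small ball around $q$ (well inside $U$) that acts as a translation in the $E^{ws}$-direction. In the case $\lambda>0$ this produces a saddle-node type bifurcation of the return map $g^{\pi(q)}$ along $E^{ws}$: two new fixed points $q_1,q_2$ appear on opposite sides of $W^{ss}_{loc}(q,g)$, each hyperbolic of the same index as $q$ and of $g$-period $\pi(q)$. In the case $\lambda<0$, the same bump applied to $g^{2\pi(q)}$, whose weak stable multiplier $(\lambda')^2$ is close to $+1$, produces two fixed points of $g^{2\pi(q)}$ interchanged by $g^{\pi(q)}$, i.e.\ a single 2-periodic orbit of $g$ whose two points play the roles of $q_1,q_2$ with $g$-period $2\pi(q)$. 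To finish I would check $q_i\sim p_g$: the transverse intersection $y_i\in W^u(p)\cap B_i$ persists under the total $C^1$-small perturbation, and since $W^s_{loc}(q_i,g)$ is $C^0$-close to $B_i$ on the appropriate side of $W^{ss}_{loc}(q,g)$, one obtains $W^u(p_g)\pitchfork W^s(q_i,g)\ne\emptyset$; a symmetric argument based on the original transverse intersection of $W^s(p)$ with $W^u(\mathrm{Orb}(q))$ (which exists since $p\sim q$) yields $W^s(p_g)\pitchfork W^u(q_i,g)\ne\emptyset$.

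The main obstacle will be executing the combined perturbation while keeping it $C^1$-small enough to remain inside $\mathcal{U}$ and simultaneously (i) placing the two new fixed points on opposite sides of $W^{ss}_{loc}(q,g)$, so the non-ss data can actually be recycled; (ii) preserving hyperbolicity and the stable index of $q$ and of both $q_i$; and (iii) getting the period exactly $\pi(q)$ or $2\pi(q)$ rather than a larger multiple. The role of the hypothesis on $\delta$ is precisely to reconcile (i)--(iii) with the $C^1$ budget: a translation of order $\varepsilon$ in $E^{ws}$ has $C^1$-cost of order $\varepsilon$, but displaces fixed points of the return map by order $\varepsilon/(1-|\lambda|^{1/\pi(q)})\gtrsim\varepsilon/\delta$, so small $\delta$ is exactly what makes macroscopic geometric effects reachable within a $C^1$-small perturbation.
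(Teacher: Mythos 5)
The central step has a gap. A localized translation in the $E^{ws}$ direction applied to a linear contraction does not produce a pair of new fixed points. In the $1$-dimensional model $x\mapsto\lambda'x+\varepsilon\phi(x)$ with $0<\lambda'<1$ and $\phi$ a nonnegative bump centered at $0$, the fixed-point equation $(1-\lambda')x=\varepsilon\phi(x)$ has exactly one solution: for $\varepsilon>0$ the line through the origin with small positive slope crosses the bump once on $\{x>0\}$ and never on $\{x<0\}$. The unique fixed point is merely displaced, and no saddle-node occurs because a genuine saddle-node requires a quadratic tangency, which a translation does not supply. Even if you repair this by inserting the needed quadratic term, a second obstruction appears: a saddle-node along the weak-stable line yields one new point with central multiplier $<1$ and one with central multiplier $>1$, so the resulting periodic points have stable indices $\mathrm{ind}(q)$ and $\mathrm{ind}(q)-1$. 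The latter cannot be homoclinically related to $p$, so item $2$ of the lemma would fail for it.

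The construction in the paper sidesteps both difficulties. It first makes $f$ locally linear along $\mathrm{Orb}(q)$ and then scales each $D_{f^iq}f$ by a radial bump $\beta(v)$ equal to $\xi^{-1}=|\lambda|^{-1/\pi(q)}$ near the origin and to $1$ far away. This neutralizes the central contraction exactly on the segment $\exp_q(E^c(q,\eta/3))$, so that $\tilde{g}^{\pi(q)}$ is $\mathrm{Id}$ (or $-\mathrm{Id}$) there. The two endpoints of this segment are taken as $q_1,q_2$; each is a boundary point of a neutral interval, so each has central multiplier exactly $1$, and a further small perturbation can make both of them hyperbolic with stable index $\mathrm{ind}(q)$, whence both are homoclinically related to $p$ via the persistent intersections $y_1\in B_1$, $y_2\in B_2$. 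The hypothesis $(1-\delta)^{\pi(q)}<|\lambda|<1$ already guarantees that $\xi^{-1}$ is close to $1$, so this multiplicative bump is $C^1$-small with no need for a preliminary Gourmelon step.
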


\begin{proof} After an arbitrarily small perturbation, we can assume
that $f$ is ``locally linear'' near the orbit of $q$ in the sense
that there is $r>0$ such that
$$f|_{B_r(f^iq)}=\exp_{f^{i+1}q}\circ D_{f^iq}g\circ
\exp_{f^iq}^{-1}$$ for any $0\leq i<\pi(q)$. Let $E^c(f^iq)\subset
T_{f^iq}M$ be the eigenspace of $D_{q}g^{\pi(q)}$ associated to the
simple real weakest stable eigenvalue $\lambda$ and $E^{ss}(f^iq)$
be the strong stable eigenspace with $E^c(f^iq)\oplus
E^{ss}(f^iq)=E^s(f^iq)$. For $\eta>0$, denote the ball in
$E^c(f^iq)$ of radius $\eta$ about the origin to be $E^c(f^iq,
\eta)$. Similarly we define $E^s(f^iq, \eta)$ and
$E^{ss}(f^iq,\eta)$. Choose $r_0$ small enough such that $E^s(f^iq,
r_0)\subset W^s(f^iq)$ and $E^{ss}(f^iq, r_0)\subset W^{ss}(f^iq)$.
Let $B_1,B_2$ be the two components of $E^s(q,r_0)\setminus
E^{ss}(q,r_0)$. By the assumption we know that there exist
$y_1,y_2\in W^u(p)$ such that $y_1\in B_1$ and $y_2\in B_2$ which
are the transverse intersection points of $W^u(p)$ and $W^s(q)$.

We construct a perturbation $\tilde{g}$ of $f$. Let
$\alpha(x):[0,+\infty)\to[0,+\infty)$ be a bump function which
satisfies (1) $\alpha|_{[0,1/3]}=1$, (2)
$\alpha|_{[2/3,+\infty)}=0$, (3) $0<\alpha|_{(1/3,2/3)}<1$ and (4)
$0\leq \alpha'(x)<4$ for all $x\in[0,+\infty)$. For a small $\eta>0$
and any $0\leq i<\pi(q)$, let $\xi=\sqrt[\pi(q)]{|\lambda|}$ be the
normalized eigenvalue associated to $\lambda$, define a real
function $\beta:T_{f^iq}M\to \mathbb{R}$ by
$$\beta(v)=\xi^{-1}\alpha(|v|/\eta)+(1-\alpha(|v|/\eta)).$$ Thus
 $\beta(v)=\xi^{-1}$ for $|v|\leq\eta/3$,
$1<\beta(v)<\xi^{-1}$ for $\eta/3<|v|<2\eta/3$, and $\beta(v)=1$ for
$|v|\geq2\eta/3$. We always assume $\eta$ much less than $r_0$.
Define a perturbation $\tilde{g}$ of $f$ in $\bigcup_{0\leq
i<\pi(q)}B(f^iq,\eta)$ to be
$$\tilde{g}(x)=\exp_{f^{i+1}q}(\beta(v)
\cdot D_{f^{i}q}f(v)), \ \ v=\exp_{f^iq}^{-1}(x)$$ for $x\in B(f^iq,
\eta)$ for all $0\leq i<\pi(q)$, and define $\tilde{g}(x)=f(x)$ for
$x \notin \bigcup_{0\leq i<\pi(q)}B(f^iq,\eta)$ . It is easy to
check that $\tilde{g}$ is $C^1$ close to $f$ if $\xi$ is
sufficiently close to 1.

One can easily check that
$\tilde{g}^{\pi(q)}|_{\exp_q(E^c(q,\eta/3))}=Id \text {\ or\ } -Id$.
Let $q_1,q_2$ be the end points of $\exp_q(E^c(q,\eta/3))$. We know
that $q_1,q_2$ are periodic points of $\tilde{g}$ with period equal
to $\pi(q)$ or $2\pi(q)$. If we choose $\eta$ small enough, the
negative orbits of $y_{1,2}$ will be unchanged under the
perturbation $\tilde{g}$ and the positive orbits of $y_{1,2}$ will
goes to the orbits of $q_{1,2}$ respectively. We also know that if
$\eta$ is small then the local unstable manifolds of $q_{1,2}$ (with
respect to $\tilde{g}$) will be $C^1$ close to the local unstable
manifold of $q$ (with respect to $f$ ). Since the original $W^u(q)$
(with respect to $f$) has a transverse intersection with $W^s(p)$,
one have that $W^u(q_{1,2})$ (with respect to $\tilde{g}$) will have
a transverse intersection with $W^s(p)$. After an arbitrarily small
perturbation near $q_{1,2}$, we can make $q_{1,2}$ hyperbolic and
$y_{1,2}$ still in the intersection of $W^u(p)$ and $W^s(q_{1,2})$
respectively. This shows the existence of $g$ and ends the proof of
lemma.
\end{proof}

Now we prove that, in a structurally stable homoclinic class,
eigenvalues of periodic orbits are uniformly and robustly away from
the unit circle.

\begin{Proposition}\label{Proposition21} Let $f$ be a diffeomorphism and
let $p\in M$ be a hyperbolic periodic point of $f$. If $H(p,f)$ is
structurally stable, then there are a constant $0<\alpha<1$ and a
neighborhood $\mathcal{U}$ of $f$ such that, for any
$g\in\mathcal{U}$ and any periodic point $q$ of $g$ that is
homoclinically related to $p_g$, the derivative $D_qg^{\pi(q)}$ has
no eigenvalue with modulus in $(\alpha^{\pi(q)},\alpha^{-\pi(q)})$.
\end{Proposition}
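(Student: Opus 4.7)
The plan is to argue by contradiction, engineering an arbitrarily small $C^1$-perturbation of $g$ that produces strictly more periodic orbits of some specific period inside $H(p_g,g)$ than before, thereby violating the period-preserving bijection on periodic points furnished by structural stability.

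Suppose the conclusion fails. Then for every $\alpha\in(0,1)$ close to $1$ and every $C^1$-neighborhood $\mathcal{U}$ of $f$ there exist $g\in\mathcal{U}$ and a periodic point $q$ of $g$ homoclinically related to $p_g$ such that $D_q g^{\pi(q)}$ has an eigenvalue $\mu$ with $\alpha^{\pi(q)}<|\mu|<\alpha^{-\pi(q)}$. Replacing $f$ by $f^{-1}$ if necessary and selecting the eigenvalue closest to the unit circle, we may assume $\mu$ is a weakest stable eigenvalue with $|\mu|<1$; the hyperbolicity of $q$ then forces $\pi(q)\to\infty$ as $\alpha\uparrow 1$. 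Applying the preceding lemma, a further small perturbation (still in $\mathcal{U}$) allows us to assume the weakest stable eigenvalue of $q$ is real and simple, with normalized modulus $\sqrt[\pi(q)]{|\mu|}$ still arbitrarily close to $1$. By Proposition~\ref{Generic} we may take $g\in\mathcal{R}_0$, so that all periodic orbits of $g$ are hyperbolic and isolated.

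Next, we arrange that $q$ is a non-ss boundary point. Since $q\sim p_g$, $W^u(p_g)\pitchfork W^s(q)\neq\emptyset$ at some point $y$; under iteration, $y$ accumulates on $q$ from one of the two components $B_1,B_2$ of $W^s_{\mathrm{loc}}(q)\setminus W^{ss}_{\mathrm{loc}}(q)$, say $B_1$. To produce an intersection with the other component we combine a $\lambda$-lemma argument with a further arbitrarily small $C^1$-perturbation supported away from $\mathrm{Orb}(q)$, pushing a branch of $W^u(p_g)$ across $W^{ss}_{\mathrm{loc}}(q)$; the required perturbation can be chosen much smaller than the one controlling $|\mu|$, so it does not disturb the previous estimates. \emph{This is the main technical step.} Now choose $\alpha$ close enough to $1$ so that the hypothesis $(1-\delta)^{\pi(q)}<|\mu|<1$ of Lemma~\ref{Lemma22} holds for the $\delta$ supplied by that lemma, and invoke Lemma~\ref{Lemma22} to obtain $\tilde g\in\mathcal{U}$ coinciding with $g$ outside an arbitrarily small neighborhood $U$ of $\mathrm{Orb}(q)$, together with two hyperbolic periodic points $q_1,q_2$ of $\tilde g$ of common period $k\in\{\pi(q),2\pi(q)\}$, both homoclinically related to $p_{\tilde g}$ and with orbits inside $U$. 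The orbit of $q$ is fixed by $\tilde g$ and hence survives as a periodic orbit. Shrinking $U$ beforehand, we may further assume $U$ contains no periodic orbit of $g$ of period $\leq 2\pi(q)$ other than $\mathrm{Orb}(q)$.

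Finally, we derive the contradiction. Structural stability of $H(p,f)$ gives period-preserving topological conjugacies $f|_{H(p,f)}\cong g|_{H(p_g,g)}$ and $f|_{H(p,f)}\cong \tilde g|_{H(p_{\tilde g},\tilde g)}$; composing produces a period-preserving bijection between the periodic orbits of $g|_{H(p_g,g)}$ and $\tilde g|_{H(p_{\tilde g},\tilde g)}$, and by the Kupka--Smale property both sides are finite for each fixed period. Outside $U$ the two diffeomorphisms agree, so their period-$k$ orbits disjoint from $U$ are in natural bijection. Inside $U$, however, $\tilde g$ has strictly more period-$k$ orbits than $g$: when $k=\pi(q)$ the count jumps from $1$ to $3$ (the orbits of $q_1$ and $q_2$ being distinct), and when $k=2\pi(q)$ it jumps from $0$ to $1$ (with $q_1,q_2$ lying on a single period-$2\pi(q)$ orbit). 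In either case the bijection is contradicted, which proves the proposition.
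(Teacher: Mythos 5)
Your overall blueprint matches the paper's: argue by contradiction, normalize to a real simple weakest stable eigenvalue via the preceding lemma, arrange the non-ss boundary condition so Lemma~\ref{Lemma22} applies, and then contradict the period-preserving conjugacy by a periodic-orbit count. However, there are two genuine gaps in your execution, and in each place the paper uses a different (and necessary) device that you have replaced with a handwave.

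\textbf{The non-ss boundary reduction.} You propose to perturb $W^u(p_g)$ so that it ``crosses'' $W^{ss}_{\mathrm{loc}}(q)$ on both sides, via a small perturbation supported away from $\mathrm{Orb}(q)$, and you flag this as the main technical step without substantiating it. That claim is not obviously true: whether $W^u(p_g)\cap W^s(q)$ accumulates on $q$ from both sides of the strong stable lamination is a delicate geometric condition, and there is no off-the-shelf perturbation result that achieves it directly. The paper does not try to do this. Instead it keeps $W^u(p_g)$ where it is, picks a homoclinic point $s\in W^s(q_0)\pitchfork W^u(q_0)$, works in a partially hyperbolic splitting $E^{ss}\oplus E^{cs}\oplus E^u$ on the hyperbolic set $\mathrm{Orb}(q_0)\cup\mathrm{Orb}(s)$, and shadows a pseudo-orbit along it to manufacture a \emph{new} periodic point $q_0'\sim q_0$ whose local stable manifold is crossed on both sides — one side by $W^u(q_0)$ and the other by $W^u(s)$ — and which inherits a nearby real simple weakest eigenvalue. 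It then replaces $q_0$ by $q_0'$. This is an essentially different mechanism and you would need to reproduce it (or justify your alternative) for the proof to go through.

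\textbf{The counting step.} You decompose the period-$k$ orbits of $H(p_g,g)$ and of $H(p_{\tilde g},\tilde g)$ into those inside and outside $U$ and assert that, since $g=\tilde g$ off $U$, the outside orbits are in natural bijection. This does not follow: membership of a periodic orbit in a homoclinic class is a \emph{global} condition (it depends on transverse heteroclinic connections with $\mathrm{Orb}(p)$ that may pass through $U$), so perturbing inside $U$ can add or remove orbits from the class even though those orbits themselves are untouched. The paper sidesteps exactly this: it fixes a finite collection of periodic points $p_i^k$, explicitly selects heteroclinic intersection orbits $x_{kij},y_{kij}$ so that the resulting closed set $\Gamma$ is disjoint from $\mathrm{Orb}(q_0)$, takes $U$ disjoint from $\Gamma$, and invokes item~4 of Proposition~\ref{Generic} to control continuations of the classes, thereby guaranteeing the chosen finite family of periodic points persists in the class after perturbation. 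Your version skips all of this. A related issue is the precise count inside $U$: the $\tilde g$ given by Lemma~\ref{Lemma22} has a whole degenerate interval of $\pi(q)$-periodic points (since $\tilde g^{\pi(q)}$ is the identity on a central segment), so ``$1$ jumps to $3$'' is not right as stated; one must first perturb again into the Kupka--Smale residual, after which the paper only claims the robust inequality $\sharp P_N(g')\geq\sharp P_N(g)+1$ for the controlled finite family, not an exact local count.

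In short, the skeleton is the right one, but the two steps you compress into single sentences are exactly where the paper's hard work lives, and your replacements for them do not hold up.
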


\begin{proof}
We prove  by contradiction. Suppose  there is a diffeomorphism $g$
arbitrarily $C^1$ close to $f$ and a periodic point $q_0\in
H(p_g,g)$ homoclinically related to $p_g$ such that
$D_{q_0}g^{\pi(q_0)}$ has an eigenvalue $\mu$ with
$\sqrt[\pi(q_0)]{|\mu|}$ arbitrarily close to $1$.

Without loss of generality we assume that $g\in\mathcal{R}_0$ and
the eigenvalue $\mu$ of $q_0$ is the weakest stable eigenvalue. By
using Lemma 2.1, we also can assume that $\mu$ is real and has
multiplicity $1$.

Let $W_{loc}^{ss}(q_0)$ be the codimension one strong stable
manifold of $q_0$ with respect to $g$. Choose $loc$ small enough
such that $W_{loc}^s(q_0)\backslash W_{loc}^{ss}(q_0)$ has two
components $B_1$ and $B_2$. We can assume that $q_0$ is a non-ss
boundary point. Otherwise, if $q_0$ is not a non-ss boundary point,
without loss of generality, we can assume that $W^u(p)$ transversely
intersect $B_1$ at the point $z$. By the $\lambda$-Lemma we can find
a point $s\in W^s(q_0)\pitchfork W^u(q_0$) which is arbitrarily
close to $z$. It is well known that ${\rm Orb}(q_0)\cup {\rm
Orb}(s)$ is a hyperbolic invariant set of $g$. Replacing $g$ by an
arbitrarily small perturbation we can assume that ${\rm
Orb}(q_0)\cup {\rm Orb}(s)$ also admits a partially hyperbolic
splitting $E^{ss}\oplus E^{cs}\oplus E^u$ where $E^{cs}(q_0)$ is the
eigenspace of $D_{q_0}g^{\pi(q_0)}$ associated to the eigenvalue
$\mu$(See Lemma 4.13 of \cite{BDP} for the construction of the
perturbation). By taking $k>0$ large one can assume that
$W_{loc}^{ss}(g^{k\pi(q_0)}s)$ is closer to $W_{loc}^{ss}(q_0)$ than
$W_{loc}^{ss}(s)$. By the shadowing property of ${\rm Orb}(q_0)\cup
{\rm Orb}(s)$, one can find a hyperbolic periodic $q_0'\sim q_0$
arbitrarily close to $f^{k\pi(q_0)}s$ such that its orbit stays
mostly nearby the orbit of $q_0$. Since the orbit of $q_0'$ is
chosen close to the partially hyperbolic set ${\rm Orb}(q_0)\cup
{\rm Orb}(s)$, the strong stable manifold $W_{loc}^{ss}(q_0')$ is
well defined and so $W_{loc}^s(q_0')\backslash W_{loc}^{ss}(q_0')$
has two component $B_1(q_0)$ and $B_2(q_0)$. We can choose $q_0'$
close to $f^{k\pi(q_0)}s$ enough such that $W^u(q_0)$ crosses
$B_1(q_0)$ and $W^u(s)$ crosses $B_2(q_0)$. By the $\lambda$-Lemma
we know that $q_0'$ is a non-ss boundary point. If the orbit of
$q_0'$ spends enough time nearby the orbit of $q_0$ we can get that
$q_0'$ has a real simple normalized weakest eigenvalue close to
$\sqrt[\pi(q_0)]{|\mu|}$. In this case we replace $q_0$ by $q_0'$.

Now we go back to the proof of proposition. We can separate the
hyperbolic periodic points in $H(p_g,g)$ into several sets. Let $I$
be the set of integers $i$ such that there exist hyperbolic periodic
points of index $i$ in $H(p_g,g)$. Assume
$I=\{i_1,i_2,\cdots,i_s\}$. For any $i_k\in I$, denote $$P^k=\{q\in
H(p_g,g)| q \text{ is a periodic point of index } i_k\}.$$ Choose
$p_k\in P^k$ such that $p_k$ has the smallest period in $P^k$. By
item 4 of Proposition \ref{Generic}, we can find a neighborhood
$\mathcal{V}\subset\mathcal{R}_0$ of $g$ such that for any
$g'\in\mathcal{V}$, $H(p_{g'},g')=H({p_i}_{g'},g')$ for all
$i=1,2,\cdots,s$. Note here that $p_{g'}$ is the continuation of $p$
and ${p_i}_{g'}$ is the continuation of $p_i$ associated to $g'$.

Without loss of generality, we assume that $p\in P^1$ and hence
$q_0$ is also contained in $P^1$. Let
$$N=\max\{\pi(p_1),\pi(p_2),\cdots,\pi(p_s),2\pi(q_0)\}$$ and
$P_N(g)=\{q\in H(p_g,g)| q \text{ is a periodic point with }
\pi(q)\leq N \}$ and $P_N^k=P_N(g)\cap P^k$. It is easy to see that
$P_N(g)$ and $P_N^k$ are finite set. Let
$P_N^1=\{p_1^1,\cdots,p_{l_1}^1\}\cup {\rm Orb}(q_0)$ and
$P_N^k=\{p_1^k,\cdots,p_{l_k}^k\}$ for $k=2,\cdots,s$. By item 3 of
Proposition \ref{Generic} one can get that $p_i^k\sim p_j^k$ for all
$1\leq i<j\leq l_k$ and $k=1,2,\cdots,s$. Pick $x_{kij}\in W^s({\rm
Orb} (p_i^k))\cap W^u({\rm Orb} (p_j^k))$ and $y_{kij}\in W^u({\rm
Orb} (p_i^k))\cap W^s({\rm Orb} (p_j^k))$ for all $1\leq k\leq s$
and $1\leq i<j\leq l_k$. Let $\Gamma$ be the union of all orbits of
$p_i^k$ and all orbits of $x_{kij}$ and all orbits of $y_{kij}$. We
know that $\Gamma$ is a close set and $\Gamma\cap {\rm
Orb}(q_0)=\emptyset$. Now we take a neighborhood $U$ of ${\rm
Orb}(q_0)$ such that $\Gamma\cap U=\emptyset$ and perform a
perturbation $g'$ by Lemma \ref{Lemma22}. With another arbitrarily
small perturbation we can assume that $g'\in\mathcal{V}$. Let
$P_N(g')=\{q\in H(p_{g'},g')| q \text{ is a periodic point of } g'
\text{ with } \pi(q)\leq N \}$. Since $g'\in\mathcal{V}$, we know
that ${p_i}_{g'}\in H(p_{g'},g')$ and all ${p_i^k}_{g'}\in
H(p_{g'},g')$. The periodic point $q_0$ of $g$ disappears in
$H(p_{g'},g')$ but two more periodic points $q_1,q_2$ appear in
$H(p_{g'},g')$. One can easily get that $\sharp P_N(g')\geq \sharp
P_N(g)+1$. This contradict that $H(p_g,g)$ is conjugate to
$H(p_{g'},g')$. This ends the proof of Proposition
\ref{Proposition21}.
\end{proof}

Let $\pi:E\to \Lambda$ be a finite dimensional vector bundle and
$f:\Lambda\to \Lambda$ be a homeomorphism. A continuous map $A:E\to
E$ is called a {\it linear co-cycle} (or {\it bundle isomorphism})
if $\pi A=f\pi$, and if $A$ restricted to every fiber is a linear
isomorphism.  The topology of $\Lambda$ is not relevant to our aim
here, and we assume that $\Lambda$ has the discrete topology. We say
$A$ is {\it bounded} if there is $N>0$ such that ${\rm
max}\{\|A(x)\|, \|A^{-1}(x)\|\}\le N$ for every $x\in \Lambda$,
where $A(x)$ denotes $A|_{E(x)}$. For two linear co-cycles $A$ and
$B$ over the same base map $f:\Lambda\to \Lambda$, define
$$d(A, B)={\rm sup}_{x\in \Lambda}\{\|A(x)-B(x)\|, \|A^{-1}(x)-B^{-1}(x)\|\}.$$

 A periodic point $p\in \Lambda$ of
$f$ is called {\it hyperbolic} with respect to $A$ if $A^{\pi(p)}$
has no eigenvalues of absolute value 1, where ${\pi(p)}$ is the
period of $p$. As usual, we denote the contracting and expanding
subspaces of $p$ to be $E^s(p)$ and $E^u(p)$. Then
$E(p)=E^s(p)\oplus E^u(p)$. If every point in $\Lambda$ is periodic
of $f$, then $A$ is called a {\it periodic  linear co-cycle}. A
bounded periodic linear co-cycle $A$ is called a {\it star system}
if there is $\epsilon>0$ such that any $B$ with $d(B, A)<\epsilon$
has no non-hyperbolic periodic orbits. (This notion corresponds to
that of diffeomorphisms on the manifold $M$ but, since perturbations
on manifolds are less restrictive, the star condition on manifolds
is stronger. In fact a star condition on a manifold implies Axiom A
and no-cycle.) The next fundamental result of Liao and Ma\~n\'e says
that, if $A$ is a star system, then the individual hyperbolic
splittings $E^s(p)\oplus E^u(p)$ of $p\in \Lambda$, put together,
form a dominated splitting. It also gives some estimates for  rates
on periodic orbits.

\begin{Theorem}\label{PLC} (\cite{Liao1}, \cite{Man})
Let  $A:E\to E$ be a bounded periodic linear co-cycle over
$f:\Lambda\to \Lambda$. If $A$ is a star system, then there is
$\epsilon>0$ and three constants $m>0$, $C>0$ and $0<\lambda<1$ such
that, for any linear co-cycle $B$ over  $f$ with $d(B, A)<\epsilon$,
and any periodic point $q$ of $B$, the following conditions are
satisfied:

$(1)$  $\|B^m|_{E^s(q)}\|\cdot\|B^{-m}|_{E^u(f^mq)}\|<\lambda$.

$(2)$ Let $k=[\pi(q)/m]$, then
$$\prod_{i=0}^{k-1}\|B^m|_{E^s(f^{im}(q))}\|<C\lambda^k,$$
$$\prod_{i=0}^{k-1}\|B^{-m}|_{E^u(f^{-im}(q))}\|<C\lambda^k.$$
\end{Theorem}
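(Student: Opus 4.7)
The plan is to argue by contradiction in three stages, exploiting only the fact that small perturbations of the cocycle $A$ are permitted (in the discrete setting this just means replacing each fiber map $A(x)$ by any linear isomorphism within distance $\epsilon$).

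First I would establish a uniform spectral gap: there exists $\alpha>0$ such that for every bounded $B$ with $d(B,A)<\epsilon$ and every periodic point $q$ of $B$, no eigenvalue $\mu$ of $B^{\pi(q)}|_{E(q)}$ satisfies $\sqrt[\pi(q)]{|\mu|}\in(1-\alpha,1+\alpha)$. If not, one finds $B_n\to A$ and periodic points $q_n$ with a weakest eigenvalue $\mu_n$ whose normalized modulus tends to $1$. A Franks-style fiber perturbation distributed evenly over the $\pi(q_n)$ iterates of $q_n$ rescales precisely that eigenvalue onto the unit circle while remaining arbitrarily close to $B_n$, contradicting the star hypothesis.

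Second, I would produce the integer $m$ and $\lambda<1$ such that $\|B^{m}|_{E^s(q)}\|\cdot\|B^{-m}|_{E^u(f^{m}q)}\|<\lambda$ for every periodic $q$ of every $B$ with $d(B,A)<\epsilon$. Suppose the contrary: for each $m$ there is $B_m$ near $A$ and a periodic $q_m$ along whose orbit some consecutive $m$-block has quotient arbitrarily close to $1$. This is where the main obstacle lies. Starting from such weak domination between $E^s$ and $E^u$, one needs to manufacture a small perturbation $C$ of $B_m$ (still $\epsilon$-close to $A$) whose modified cocycle has a periodic orbit with an eigenvalue of modulus $1$. The Liao--Mañé construction achieves this by performing small rotations in each plane $E^s(B_m^i q_m)\oplus E^u(B_m^i q_m)$, tilting the stable direction toward the unstable one; the fact that the quotient $\|B^{m}|_{E^s}\|\cdot\|B^{-m}|_{E^u}\|$ is close to $1$ means these rotations accumulate coherently over one period, so the new return map has a complex eigenvalue on the unit circle, contradicting Step~1. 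The delicate point is choosing the angles simultaneously small (to control $d(C,A)$), cumulatively large (to rotate past $1$), and compatible with the surviving strong-stable and strong-unstable directions of $E(q_m)$.

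Third, with the uniform inequality $\|B^{m}|_{E^s(q)}\|\cdot\|B^{-m}|_{E^u(f^{m}q)}\|<\lambda$ in hand, the estimate~(2) is obtained by a telescoping argument of Pliss type. Applying the inequality to each block along the orbit gives
$$\prod_{i=0}^{k-1}\|B^{m}|_{E^s(f^{im}q)}\|\cdot\prod_{i=0}^{k-1}\|B^{-m}|_{E^u(f^{im}q)}\|<\lambda^{k},$$
and Step~1 supplies a uniform lower bound on each of the individual product factors (no $m$-block can be too contracting in both bundles without forcing an eigenvalue near $1$ after iteration). Splitting the joint bound accordingly, and absorbing into a constant $C$ the finitely many leftover terms arising from the discrepancy $\pi(q)-km<m$, yields both inequalities in~(2). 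The hardest and most conceptual step is the perturbation construction of Step~2; Steps~1 and~3 are standard once it is in place.
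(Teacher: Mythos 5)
The paper does not prove this theorem: it quotes it from Liao and Ma\~n\'e, so there is no in-paper proof to compare against. Your three-stage skeleton --- a uniform spectral gap, then uniform domination, then the estimates at the period --- is indeed the architecture of Ma\~n\'e's argument. Step 1 is fine. Step 2 identifies the rotation construction as the crux and honestly flags its difficulty; I would only caution that a single weakly dominated block of length $m$ inside a much longer period does not automatically let you accumulate rotation over a full return, so the actual Liao--Ma\~n\'e argument must select the right stretch of the orbit to rotate on.

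The genuine gap is Step 3: the claim that (2) follows from (1) and the spectral gap by a ``telescoping argument of Pliss type'' is wrong, and the mechanism you offer does not exist. The joint bound $\prod_i\|B^m|_{E^s}\|\cdot\prod_i\|B^{-m}|_{E^u}\|<\lambda^k$ controls a product, but to extract a bound on one factor you would need a lower bound on the other which is uniform in $k$; no such bound holds, since $\prod_i\|B^{-m}|_{E^u(f^{im}q)}\|$ is typically exponentially small in $k$ (it is a product of operator norms of the \emph{inverse} along an expanding bundle). Nor does Step 1 rescue it: the spectral gap bounds eigenvalues, not norms, and when ${\rm dim}\,E^s>1$ the quantity $\prod_i\|B^m|_{E^s}\|$ can remain of order $1$ while every eigenvalue of $B^{\pi(q)}|_{E^s}$ is as small as $(1-\alpha)^{\pi(q)}$ (consider a near-Jordan block with eigenvalue $1-\alpha$ and bounded off-diagonal entry). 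Such a cocycle is of course not a star system, but one must \emph{use} the star property again to see that. That is precisely what (2) requires: a second, independent perturbation argument --- if uniform contraction at the period fails, perturb the cocycle along the offending periodic orbit (again in the Franks/Gourmelon style, now exploiting the non-normality of the return map) to produce a nearby non-hyperbolic orbit. So (1) and (2) are parallel consequences of the star condition, each needing its own perturbative proof; (2) is not a corollary of (1) plus the spectral gap.
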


The two inequalities in Item 2 are usually referred to as
``uniformly contracting (expanding) at the periods" for periodic
orbits. We remark that Liao and  Ma\~n\'e did not use the term ``
linear co-cycles''. Liao worked  (for flows) on tangent bundles of
manifolds, and  Ma\~n\'e worked on
 periodic sequences of linear isomorphisms.

Let $\Lambda$ be the union of periodic orbits of $H(p,f)$ which are
homoclinicly related to $p$. The tangent map $Df:T_\Lambda M\to
T_\Lambda M$ acts as a periodic linear co-cycle over $f$. We verify
that it is a star system in the sense of linear co-cycles hence
Theorem \ref{Mainthm1} will follows.
\bigskip

{\noindent\it Proof of Theorem \ref{Mainthm1}}. Suppose for the
contrary there is a linear co-cycle $A:T_\Lambda M\to T_\Lambda M$
arbitrarily close to $Df$ that has a periodic orbit ${\rm Orb}(q)$
of $f$ which is non-hyperbolic with respect to $A$. We join $A$ with
$Df$ by a path $A_t$ with $A_0=Df$ and $A_1=A$. Since $A$ can be
arbitrarily close to $Df$, we may assume $A_t|_{{\rm Orb}(q)}$
satisfies assumption (2) of Proposition \ref{Gourmelon}, for every
$t\in [0, 1]$. Let $s\in (0, 1]$ be the first parameter that makes
$q$ non-hyperbolic, namely, $q$ is non-hyperbolic with respect to
$A_s$, but is hyperbolic with respect to $A_t$, for every $t\in [0,
s)$. Take $s'$ slightly less than $s$ so that one of the eigenvalues
$\mu$ of $A_{\pi(q)-1, s'}\circ A_{\pi(q)-2, s'}\circ\cdots\circ
A_{0, s'}$ (in absolute value) is within $(\alpha^{\pi(q)},
\alpha^{-\pi(q)})$. Then the path $A_t$, $t\in [0, s']$, satisfies
the three assumptions of Proposition \ref{Gourmelon}, hence there is
$g\in {\mathcal U}$ that preserves ${\rm Orb}(q)$ and ${\rm Orb}(p)$
such that $Dg|_{{\rm Orb}(q)}=A_{s'}|_{{\rm Orb}(q)}$ and such that
$p$ and $q$ are homoclinically related with respect to $g$. Such a
weak eigenvalue $\mu$ contradicts Proposition \ref{Proposition21}.
This verifies that $Df:T_\Lambda M\to T_\Lambda M$ is a star
periodic linear co-cycle over $f$. Thus Theorem \ref{Mainthm1}
follows from Theorem \ref{PLC}.

\section{Structurally stable homoclinic class of generic diffeomorphisms}

In this section, we will prove a weak version of Theorem
\ref{Mainthm2} and Theorem \ref{Mainthm3} under the generic
assumptions.
\begin{Proposition} \label{Prop31}
There exists a residual set $\mathcal{R}_1\subset {\rm Diff}(M)$
such that for any $f\in\mathcal{R}_1$ and any hyperbolic periodic
point $p$ of $f$ with $Ind(p)=1$, the structurally stability of
$H(p,f)$ implies the hyperbolicity.
\end{Proposition}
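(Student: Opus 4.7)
The plan is to combine Theorem~\ref{Mainthm1} with $C^1$-generic tools (density of periodic orbits inside $H(p,f)$ and Ma\~n\'e's ergodic closing lemma) to upgrade the uniform rates at the periods into uniform hyperbolicity on all of $H(p,f)$.

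\textbf{Setup.} I would take $\mathcal{R}_1\subseteq \mathcal{R}_0$ by additionally imposing that periodic orbits homoclinically related to $p$ are dense in $H(p,f)$ and that Ma\~n\'e's ergodic closing lemma holds, both of which are residual. Fix $f\in\mathcal{R}_1$ with $H(p,f)$ structurally stable. Theorem~\ref{Mainthm1} yields a dominated splitting $T_{H(p,f)}M=E\oplus F$ with $\dim E={\rm ind}(p)=1$ and constants $C,\lambda,m$ controlling contraction on $E$ and expansion on the unstable bundles along periodic orbits homoclinically related to $p$.

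\textbf{Step 1: uniform contraction on $E$.} Since $\dim E=1$, the cocycle $Df|_E$ is essentially scalar. If $E$ were not uniformly contracted, upper semicontinuity of Lyapunov exponents would produce an ergodic $f$-invariant measure $\mu$ supported in $H(p,f)$ with non-negative $E$-exponent. Ma\~n\'e's ergodic closing lemma, combined with items (3)--(4) of Proposition~\ref{Generic} and the density hypothesis, would approximate $\mu$ by a periodic orbit $q$ homoclinically related to $p$ whose normalized weakest stable eigenvalue is arbitrarily close to $1$, contradicting the uniform rate at the period supplied by Theorem~\ref{Mainthm1}(2).

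\textbf{Step 2: uniform expansion on $F$.} Here $\dim F=\dim M-1$ can be large, and the second estimate in Theorem~\ref{Mainthm1}(2) only controls $E^u(q)\subseteq F(q)$, with equality exactly when ${\rm ind}(q)=1$. I would therefore first rule out periodic orbits $q\in H(p,f)$ with ${\rm ind}(q)>1$: such a $q$ would, by Proposition~\ref{Generic}(3), create a heterodimensional configuration in the class, and invoking Proposition~\ref{Gourmelon} in the spirit of the proof of Proposition~\ref{Proposition21} would allow one to perturb $f$ so as to alter the set of periodic points of bounded period inside $H(p_g,g)$, contradicting the topological conjugacy provided by structural stability. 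Once the class is index-pure, $F(q)=E^u(q)$ at every periodic $q$, so Theorem~\ref{Mainthm1}(2) really gives uniform expansion at the periods on $F$. A second application of the ergodic closing lemma then forces the smallest $F$-Lyapunov exponent of every ergodic invariant measure on $H(p,f)$ to be uniformly bounded away from $0$; together with the domination and compactness of the space of invariant measures, this yields uniform expansion of $F$ on all of $H(p,f)$, and hence hyperbolicity.

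\textbf{Main obstacle.} The delicate part is the index-purity argument in Step~2: one must use structural stability non-trivially (through counting periodic points of bounded period, as in the proof of Proposition~\ref{Proposition21}) to exclude periodic orbits of index larger than $1$ from the class. Once index purity is in hand, propagating uniform rates from periodic orbits to the whole class via the ergodic closing lemma and the dominated splitting is a standard Ma\~n\'e--Liao style argument.
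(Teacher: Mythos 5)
Your proposal follows a genuinely different route from the paper in both halves of the argument. The paper first shows that $E$ is uniformly contracting by invoking the ``minimally non-contracting set'' machinery of \cite{WW} and \cite{Cro2}: if $E$ is not contracting, there is a compact set $\Lambda\subset H(p,f)$ on which $\log\|Df|_E\|$ has Birkhoff average $0$ for every point; Theorem~4.1 of \cite{WW} then produces index-$1$ periodic orbits $Q_n\subset H(p,f)$ Hausdorff-converging to $\Lambda$; item~3 of Proposition~\ref{Generic} makes them homoclinically related to $p$; and Pliss points on these orbits, together with the rate-at-periods estimate of Theorem~\ref{Mainthm1}(2), contradict the zero average. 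Once $E$ is contracting, the paper simply cites the main theorem of \cite{BGY} to conclude hyperbolicity. You replace both steps by ergodic closing-lemma arguments and a separate index-purity argument.

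On Step~1: the idea is sound, but Ma\~n\'e's ergodic closing lemma by itself produces a periodic orbit of a \emph{perturbed} diffeomorphism $g$ shadowing the generic orbit of $\mu$, not a periodic orbit of $f$, and certainly not a periodic orbit lying in $H(p,f)$ and homoclinically related to $p$. You need the generic upgrade à la Crovisier (\cite{Cro3}) that periodic orbits of $f$ itself approximate $\mu$, and then a nontrivial argument that, because $H(p,f)$ is not locally maximal, these orbits actually lie in $H(p,f)$ (Hausdorff proximity alone does not guarantee this). The paper's use of Theorem~4.1 of \cite{WW} is precisely the tool that produces index-$1$ periodic orbits inside $H(p,f)$; your write-up elides this difficulty. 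Also, your added hypothesis ``periodic orbits homoclinically related to $p$ are dense in $H(p,f)$'' is not an extra genericity assumption --- it is true by the definition of a homoclinic class.

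The real gap is in Step~2. First, your index-purity argument is not usable as stated: Proposition~\ref{Gourmelon} requires $p$ and $q$ to be homoclinically related, but a periodic point of index $>1$ is, by definition, \emph{not} homoclinically related to an index-$1$ point, so the proposition does not apply to the pair $(p,q)$. Ruling out higher-index periodic orbits inside the class is a genuinely delicate heterodimensional-cycle argument which you would need to supply. Second, even granting index purity, the second ergodic-closing pass for $F$-expansion faces the same ``are the approximating orbits inside $H(p,f)$ and related to $p$?'' obstruction as Step~1. In effect you are re-proving the main theorem of \cite{BGY}, which is a substantial result the paper deliberately invokes as a black box. Your sketch does not carry that weight as written.
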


\begin{proof}
The proof of the above proposition is just as the same as the proof
of the Proposition 5.1 of \cite{WW}. For completeness we give a
sketch of the proof here. Assuming $f\in{\rm Diff}(M)$ and
$Ind(p)=1$,  from Theorem \ref{Mainthm1} we know that if the
homoclinic class $H(p,f)$ is structurally stable then the class
admit a dominated splitting $T_{H(p,f)}M=E\oplus F$ with ${\rm dim}
E=1$. It is proved in \cite{WW} and \cite{Cro2} that there is a
residual set $\mathcal{R}_1\subset {\rm Diff}(M)$ such that if
$f\in\mathcal{R}_1$ and the one dimensional bundle $E$ is not
contracting, then for a minimally non-contracting (w.r.t the bundle
$E$) set $\Lambda\subset H(p,f)$, the dominated splitting $T_\Lambda
M=E\oplus F$ will be partially hyperbolic and
$$\lim_{n\to+\infty}\frac{1}{n}\sum_{i=0}^{n-1}\log\|Df|_{E(f^ix)}\|=0$$
for any $x\in \Lambda$. Then by applying Theorem 4.1 of \cite{WW} we
know that for this partially hyperbolic set $\Lambda$ and any
neighborhood $U$ of $\Lambda$, one can find a periodic orbit $O$
contained in $U\cap H(p,f)$ of index $1$. Thus we can take a
sequence of periodic orbits $Q_n\subset H(p,f)$ such that $Q_n\to
\Lambda$ in the Hausdorff metric. By Proposition \ref{Generic}, each
$Q_n$ is homoclinically related to ${\rm Orb}(p)$. By Theorem
\ref{Mainthm1}, there exist $\lambda\in(0,1)$ and a positive integer
$m$ such that
$$\prod_{i=0}^{k-1}\|Df^m|_{E^c(f^{im}(q))}\|<\lambda^k$$
for  $q\in Q_n$, where $k=[\pi(q)/m]$.  Note that since $K$ is
non-trivial and hence $\pi(Q_n)\to \infty$, by slightly enlarging
$\lambda$ if necessary we may assume $C=1$ in the inequality. Take
$\lambda'\in (\lambda, 1)$. By Pliss's Lemma, there are $q_n\in Q_n$
such that
$$\prod_{i=0}^{j-1}\|Df^m|_{E^c(f^{im}(q_n))}\|<(\lambda')^j$$
for all $j\geq 1$ (One can also find the proof in the proof of Lemma
\ref{goodpoint}). Taking a subsequence if necessary we assume
$q_n\to x\in \Lambda$. Then
$$\limsup_{n\to+\infty}\frac{1}{n}\sum_{i=0}^{n-1}\log\|Df^m|_{E^c(f^{im}(x))}\|<{\rm log}\lambda'.$$
This contradicts the above limit equality. Thus $E$ is  contracting.
Then one can apply the main theorem of \cite{BGY} to get the
hyperbolicity of $H(p,f)$.

\end{proof}

In \cite{CSY}, the authors proved that if $H(p,f)$ is a homoclinic
class of a diffeomorphism far away from homoclinic tangency, then
$H(p,f)$ admits a partially hyperbolic splitting. Furthermore, they
proved the following conclusion as stated in Corollary 1.4 of
\cite{CSY}.

\begin{Proposition}
For generic $f\in {\rm Diff}(M)$ which is far away from homoclinic
tangencies, the homoclinic classes $H(p,f)$ of $f$ satisfy:
\begin{itemize}
\item either $H(p,f)$ is hyperbolic,
\item or $H(p,f)$ contains weak periodic orbits related to $p$.
\end{itemize}
\end{Proposition}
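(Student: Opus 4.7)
The plan is to combine the partial hyperbolic decomposition of \cite{CSY} with $C^1$-generic ergodic tools. First I would invoke the CSY splitting: for $f$ in a residual subset of diffeomorphisms far from homoclinic tangencies, every non-trivial homoclinic class $H(p,f)$ admits a dominated splitting
$$T_{H(p,f)}M=E^s\oplus E^c_1\oplus\cdots\oplus E^c_\ell\oplus E^u,$$
in which each central sub-bundle $E^c_i$ is one-dimensional, while $E^s$ (respectively $E^u$) is uniformly contracting (respectively expanding) when non-trivial. If $\ell=0$ the class is already hyperbolic; so assume $\ell\geq 1$. If $H(p,f)$ is not hyperbolic, then some $E^c_i$ is neither uniformly contracting nor uniformly expanding along the class, which by Oseledec theory is equivalent to the existence of an ergodic $f$-invariant measure $\mu$ supported on $H(p,f)$ whose Lyapunov exponent along $E^c_i$ is zero or arbitrarily close to zero.

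The core step is to realize $\mu$ by periodic orbits inside $H(p,f)$. By the $C^1$-generic ergodic closing lemma of Abdenur--Bonatti--Crovisier, every ergodic measure supported on a homoclinic class of a generic diffeomorphism is a weak-$\ast$ limit of periodic measures carried by periodic orbits in the same class, with convergence of the Lyapunov spectrum along any continuous dominated splitting. Applying this to $\mu$ produces a sequence of periodic orbits $q_n\subset H(p,f)$ whose normalized Lyapunov exponent along $E^c_i$ tends to zero; these are the desired weak periodic orbits. Once $\text{ind}(q_n)=\text{ind}(p)$, item 3 of Proposition \ref{Generic} supplies transverse heteroclinic intersections in both directions, so that $q_n\sim p$, and the dichotomy is established.

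The main obstacle is guaranteeing that the approximating periodic orbits carry the correct index $\text{ind}(p)$. Since $\dim E^s\leq \text{ind}(p)\leq \dim E^s+\ell$, one must choose the weak central bundle $E^c_i$ with care and control the number of contracting Lyapunov exponents of $q_n$ via a Pliss-type selection. By targeting the central bundle immediately on the stable side of $\text{ind}(p)$, whose Lyapunov exponent along $q_n$ straddles zero, and passing to a Pliss subsequence in which the strictly stable bundles contract and the strictly unstable bundles expand uniformly along $q_n$, the index of $q_n$ stabilizes at $\text{ind}(p)$. Item 3 of Proposition \ref{Generic} then applies and yields the homoclinic relation, completing the argument.
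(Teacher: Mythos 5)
This Proposition is not proved in the paper at all: the text explicitly states it is ``Corollary 1.4 of \cite{CSY}'' and imports it wholesale, so there is no in-paper argument to compare your proposal against. Your proposal must therefore be judged on its own merits, and it has a genuine gap precisely at the step you yourself flag as ``the main obstacle.''

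The first two-thirds of your outline are reasonable in spirit: invoking the CSY partially hyperbolic splitting $E^s\oplus E^c_1\oplus\cdots\oplus E^c_\ell\oplus E^u$ with one-dimensional centers, observing that failure of hyperbolicity forces some $E^c_i$ to be neither uniformly contracted nor expanded, and producing an ergodic measure $\mu$ with vanishing central exponent to be realized by periodic orbits via $C^1$-generic closing. The problem is the index control, and the argument you give there does not work. The central bundle that fails to be hyperbolic is \emph{not} something you get to choose: it is whatever $E^c_i$ is forced on you by the non-hyperbolicity, and there is no reason for it to be the bundle ``immediately on the stable side of ${\rm ind}(p)$,'' i.e.\ $E^c_j$ with $j={\rm ind}(p)-\dim E^s$. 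If the weak bundle is some $E^c_i$ with $i\neq j$, the approximating orbits $q_n$ will have index oscillating around $\dim E^s+i$, not ${\rm ind}(p)$, so they fail to be homoclinically related to $p$ and item 3 of Proposition \ref{Generic} (which requires matching indices for the two-way transverse intersection) does not apply. Moreover, even if the weak bundle happened to be $E^c_j$, a Pliss-type selection controls time-averages of $\log\|Df\|$ along a single orbit --- it produces good \emph{points} on an orbit --- and says nothing about the sign of the full-period eigenvalue along $E^c_j$, which is what determines the index of $q_n$. Finally, the signs of $\mu$'s exponents along the \emph{other} central bundles are not under your control either, so the index of the closing orbits is simply undetermined by the data you have extracted. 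Repairing this requires the finer structure proved in \cite{CSY} (control of the index interval of the class and production of weak orbits at the specific index of $p$), which is exactly why the present paper cites the result rather than reproving it.
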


From the above proposition and the Theorem \ref{Mainthm1}, one can
easily get the following proposition.

\begin{Proposition}\label{Prop33}
There exists a residual subset $\mathcal{R}_2\subset {\rm Diff}(M)$
such that for any $f\in\mathcal{R}_2$ which is far away from
homoclinic tangencies and any hyperbolic periodic point $p$ of $f$,
if the homoclinic class $H(p,f)$ of $p$ is structurally stable, then
$H(p,f)$ is hyperbolic.
\end{Proposition}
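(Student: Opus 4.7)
The plan is to combine the dichotomy supplied by the CSY proposition stated just above with the uniform rate estimates on homoclinically related periodic orbits coming from Theorem \ref{Mainthm1}. First I would let $\mathcal{R}_2$ be the intersection of $\mathcal{R}_0$ from Proposition \ref{Generic} with the residual set furnished by the CSY proposition; as an intersection of two residuals, $\mathcal{R}_2$ is itself residual. Fix any $f\in\mathcal{R}_2$ which is far from homoclinic tangencies and any hyperbolic periodic point $p$ of $f$ such that $H(p,f)$ is structurally stable.

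The CSY dichotomy gives two possibilities: either $H(p,f)$ is hyperbolic (and we are done), or $H(p,f)$ contains weak periodic orbits related to $p$, meaning a sequence $q_n\sim p$ with $\pi(q_n)\to\infty$ whose normalized weakest stable or unstable eigenvalue modulus tends to $1$. Assuming the second alternative for contradiction, structural stability of $H(p,f)$ together with Theorem \ref{Mainthm1} produces constants $C>1$, $0<\lambda<1$, $m>0$ and a dominated splitting $E\oplus F$ such that for every $q\sim p$,
$$\prod_{i=0}^{k-1}\|Df^m|_{E^s(f^{im}q)}\|<C\lambda^k,\qquad \prod_{i=0}^{k-1}\|Df^{-m}|_{E^u(f^{-im}q)}\|<C\lambda^k,$$
with $k=[\pi(q)/m]$.

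Finally, I would convert these norm estimates into eigenvalue bounds. Writing $\pi(q)=mk+r$ with $0\le r<m$, the product telescopes to a bound on $\|Df^{mk}|_{E^s(q)}\|$; composing with the at most $m$ remaining iterates (which contribute a factor bounded by $\sup_M\|Df\|^m$) and taking spectral radii shows that the weakest stable eigenvalue $\mu$ of $q$ satisfies $|\mu|^{1/\pi(q)}\to\lambda^{1/m}<1$ as $\pi(q)\to\infty$. This gives a uniform upper bound on the normalized weakest stable eigenvalue of homoclinically related periodic points, and the symmetric computation for $E^u$ gives the same for the unstable side. Both conclusions flatly contradict the existence of weak periodic orbits related to $p$, so $H(p,f)$ must be hyperbolic. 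The main obstacle, if there is one, is simply making sure the CSY notion of ``weak periodic orbit related to $p$'' matches precisely what Theorem \ref{Mainthm1} excludes; no new dynamical input beyond the two cited results is needed.
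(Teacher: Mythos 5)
Your argument is correct and follows exactly the route the paper intends: the paper's proof of this proposition is a single sentence asserting that it follows from the CSY dichotomy together with Theorem \ref{Mainthm1}, and you spell out precisely that derivation, taking $\mathcal{R}_2$ to be a suitable residual intersection and showing that the uniform-at-the-period estimates of Theorem \ref{Mainthm1}(2) bound the normalized weakest stable and unstable eigenvalues of all $q\sim p$ away from $1$, which excludes the ``weak periodic orbit'' alternative. (One could also invoke Proposition \ref{Proposition21} directly for the same exclusion, but your use of Theorem \ref{Mainthm1} matches the paper's citation.)
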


\section{The proof of Theorem \ref{Mainthm2} and Theorem \ref{Mainthm3}}
From Proposition \ref{Prop31} and Proposition \ref{Prop33}, assuming
$H(p,f)$ is structurally stable, we know that if the index of $p$ is
$1$ or ${\rm dim} M-1$ or the diffeomorphism $f$ is far way from
homoclinic tangencies, then for any neighborhood $\mathcal{U}\subset
{\rm Diff}(M)$ of $f$, there exists $g\in\mathcal{U}$ such that
$H(p_g,g)$ is hyperbolic.

\begin{Lemma}
Let $f\in {\rm Diff}(M)$ and $H(p,f)$ be a structurally stable
homoclinic class of $f$. If for any neighborhood $\mathcal{U}\subset
{\rm Diff}(M)$ of $f$, there exist $g\in\mathcal{U}$ such that
$H(p_g,g)$ is hyperbolic, then $H(p,f)$ satisfies the shadowing
property. Actually, any periodic pseudo orbit in $H(p,f)$ is
shadowed by periodic orbit contained in $H(p,f)$.

\end{Lemma}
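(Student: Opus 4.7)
The plan is to transport the pseudo-orbit via the conjugating homeomorphism to a nearby hyperbolic class, invoke classical hyperbolic shadowing there, and pull the shadow back. Fix $\varepsilon>0$ and let $\mathcal{U}$ be the neighborhood of $f$ furnished by the $C^1$-structural stability of $H(p,f)$, so that for every $g\in\mathcal{U}$ we have a homeomorphism $h_g:H(p,f)\to H(p_g,g)$ with $h_g\circ f=g\circ h_g$. By hypothesis, we may choose one $g\in\mathcal{U}$ for which $H(p_g,g)$ is hyperbolic; as a hyperbolic homoclinic class it is a basic set, hence locally maximal, and so it enjoys the shadowing property in the strong form that periodic $\delta_0$-pseudo-orbits inside $H(p_g,g)$ are $\varepsilon_0$-shadowed by periodic orbits of $g$ of the same period lying in $H(p_g,g)$.

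With $g$ now fixed, the homeomorphisms $h_g$ and $h_g^{-1}$ are uniformly continuous on the compact set $H(p,f)$. I would first select $\varepsilon_0>0$ so small that $h_g^{-1}$ maps $\varepsilon_0$-close pairs in $H(p_g,g)$ to $\varepsilon$-close pairs in $H(p,f)$, then choose the shadowing input $\delta_0$ from the hyperbolic shadowing lemma applied to $H(p_g,g)$ with tolerance $\varepsilon_0$, and finally pick $\delta>0$ small enough that $h_g$ sends $\delta$-close pairs to $\delta_0$-close pairs. Given a periodic $\delta$-pseudo-orbit $\{x_i\}_{i\in\mathbb{Z}}\subset H(p,f)$ of period $N$, the sequence $y_i=h_g(x_i)$ is a periodic sequence in $H(p_g,g)$ and the conjugacy identity gives
$$d(g(y_i),y_{i+1})=d\bigl(h_g(f(x_i)),h_g(x_{i+1})\bigr)<\delta_0,$$
so $\{y_i\}$ is a periodic $\delta_0$-pseudo-orbit inside the hyperbolic set $H(p_g,g)$.

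Next I would apply the Anosov closing/shadowing lemma to obtain a periodic point $z\in H(p_g,g)$ of $g$ of period $N$ whose orbit $\varepsilon_0$-shadows $\{y_i\}$. Setting $w=h_g^{-1}(z)\in H(p,f)$ and using $h_g^{-1}\circ g=f\circ h_g^{-1}$, the orbit $\{f^i(w)\}$ is a periodic orbit of $f$ of period $N$ contained in $H(p,f)$, and by the choice of $\varepsilon_0$ relative to the modulus of continuity of $h_g^{-1}$ it $\varepsilon$-shadows the original pseudo-orbit $\{x_i\}$. This is exactly the conclusion of the lemma.

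The main obstacle is conceptual rather than technical: the conjugacy $h_g$ provided by structural stability is merely a homeomorphism, with no quantitative Hölder or Lipschitz control, and a priori it depends on $g$. The argument sidesteps this by fixing a single $g$ with $H(p_g,g)$ hyperbolic once and for all, and then using only the (qualitative) uniform continuity of this particular pair $h_g,h_g^{-1}$ on the compact set $H(p,f)$; no uniformity over $\mathcal{U}$ is needed. A secondary point worth checking is that the shadow produced by the hyperbolic shadowing lemma is actually contained in $H(p_g,g)$ and has the same period as the pseudo-orbit, but this is standard for locally maximal hyperbolic sets via the local product structure.
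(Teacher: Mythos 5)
Your proof is essentially the same as the paper's: both fix one $g\in\mathcal{U}$ with $H(p_g,g)$ hyperbolic, transport the pseudo-orbit through the conjugacy $h$, invoke hyperbolic shadowing for the locally maximal set $H(p_g,g)$, and pull the shadowing orbit back via $h^{-1}$, using only uniform continuity of $h$ and $h^{-1}$ on compacta. The only minor overreach is asserting the shadowing orbit has period exactly $N$ rather than a divisor of $N$, but that is irrelevant to the lemma's conclusion.
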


\begin{proof}
Let $h:H(p,f)\to H(p_g,g)$ be the conjugate homeomorphism. For any
$\varepsilon>0$ there exists $\varepsilon'>0$ such that for any
$x_1,x_2\in H(p_g,g)$, if $d(x_1,x_2)<\varepsilon'$, then
$d(h^{-1}(x_1),h^{-1}(x_2))<\varepsilon$. Since $H(p_g,g)$ is
hyperbolic, it is locally maximal and admits the shadowing property.
Hence for $\varepsilon'>0$, there exist $\delta'>0$ such that any
$\delta'$-pseudo orbit of $g|_{H(p_g,g)}$ can be $\varepsilon'$
shadowed by a real orbit of $g$ in $H(p_g,g)$. There exists
$\delta>0$ such that for any $y_1,y_2\in H(p,f)$, if
$d(y_1,y_2)<\delta$, then $d(h(y_1), h(y_2))<\delta'$.

Now we let $\{y_i\}_{i=a}^{b}$($-\infty\leq a<b\leq +\infty$) be a
$\delta$-pseudo orbit of $f|_{H(p,f)}$. By the choice of $\delta$ we
know that $\{h(y_i)\}$ should be a $\delta'$-pseudo orbit of
$g|_{H(p_g,g)}$. Then we know that there exists $x\in H(p_g,g)$ such
that $d(g^i(x),h(y_i))<\varepsilon'$ for all $a\leq i\leq b$. By the
choice of $\varepsilon'$, we know that $d(f^i(h^{-1}x);
y_i)<\varepsilon$ for all $a\leq i\leq b$. From the hyperbolicity of
$H(p_g,g)$ we also know that if the $\delta$-pseudo orbit $\{y_i\}$
of $f|_{H(p,f)}$ is periodic then the shadowing points $h^{-1}(x)$
is also periodic.

\end{proof}

From the above lemma one can see that Theorem \ref{Mainthm2} and
Theorem \ref{Mainthm3} is a direct corollary of the following
proposition.

\begin{Proposition}\label{Prop41}
Let $p$ be a hyperbolic periodic point, and $H(p,f)$ be the
homoclinic class of $f$ containing $p$. Assume there exist constants
$N\in\mathbb{N}, 0<\lambda<1$ such that $H(p,f)$ satisfies the
following properties $(P1)$ to $(P3)$.

$(P1)$. There exists a continuous $Df$-invariant splitting
$T_{H(p,f)} M=E\oplus F$ with ${\rm dim}E={\rm ind}(p)$ such that
for every $x\in \Lambda$,
$${\|Df|_{E(x)}\|}/{m(Df|_{F(x)})}<\lambda^2.$$

$(P2)$. For any hyperbolic periodic point $q$, if $q$ is homoclinic
related to $p$ with period $\pi(q)>N$, then
$$\prod_{i=0}^{\pi(q)-1}\|Df|_{E^s(f^i(q))}\|<\lambda ^{\pi(q)}$$
$$\prod_{i=0}^{\pi(q)-1}\|Df^{-1}|_{E^u(f^{-i}(q))}\|<\lambda ^{\pi(q)}.$$

$(P3)$. $f|_\Lambda$ has the shadowing property and every periodic
pseudo orbit can be shadowed by a periodic orbit.

Then $H(p,f)$ is hyperbolic for $f$.

\end{Proposition}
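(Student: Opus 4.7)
The plan is to promote the dominated splitting (P1) to a uniformly hyperbolic splitting on $H(p,f)$ by showing that $E$ is uniformly contracted; by symmetry, applying the argument to $f^{-1}$ with the reversed splitting $F\oplus E$ and the expansion estimate in (P2), $F$ will be uniformly expanded, so that $H(p,f)$ will be hyperbolic.

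To establish uniform contraction of $E$ on $H(p,f)$, I would argue by contradiction in the spirit of Liao--Ma\~n\'e, with the shadowing hypothesis (P3) playing the role of a closing lemma. If $E$ is not uniformly contracted, then some ergodic $f$-invariant probability measure $\nu$ supported in $H(p,f)$ has $\int\log\|Df|_E\|\,d\nu>\log\lambda$ (coming from a weak-$\ast$ accumulation point of empirical measures along orbit segments witnessing the failure of uniform contraction). By the Birkhoff ergodic theorem and Poincar\'e recurrence, one can choose a $\nu$-generic recurrent point $y\in H(p,f)$ and arbitrarily large integers $n>N$ such that simultaneously
$$\prod_{i=0}^{n-1}\|Df|_{E(f^iy)}\|>\lambda^{\,n}\qquad\text{and}\qquad d(f^ny,y)<\delta.$$
The closed loop $y,f(y),\dots,f^{n-1}(y),y$ is then a periodic $\delta$-pseudo-orbit contained in $H(p,f)$; by (P3), it is $\varepsilon$-shadowed by a genuine periodic orbit $\mathrm{Orb}(q)\subset H(p,f)$ of period $n$, and continuity of $Df|_E$ propagates the estimate to
$$\prod_{i=0}^{n-1}\|Df|_{E(f^iq)}\|>\lambda^{\,n}.$$

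The main obstacle is to ensure that the shadowing orbit $q$ satisfies the hypotheses of (P2), namely that it is hyperbolic of index $\mathrm{ind}(p)$ and homoclinically related to $p$. For hyperbolicity with the right index, I would refine the construction via a Pliss-type selection---replacing the segment by a suitable sub-segment of length still exceeding $N$ on which the average of $\log\|Df|_E\|$ lies strictly between $\log\lambda$ and $0$; the resulting shadowing orbit then satisfies $\|Df^n|_{E(q)}\|<1$ (so all $E$-eigenvalues have modulus $<1$), while the domination $\|Df|_E\|/m(Df|_F)<\lambda^2$ forces $\prod_{i=0}^{n-1}m(Df|_{F(f^iq)})>\lambda^{-n}$ (so $F$ has all eigenvalues of modulus $>1$); thus $q$ is hyperbolic of index $\dim E=\mathrm{ind}(p)$ with $E^s=E$ and $E^u=F$ on $\mathrm{Orb}(q)$. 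For the homoclinic relation $q\sim p$, periodic orbits homoclinically related to $p$ are dense in $H(p,f)$ by definition, so the pseudo-orbit can be arranged to also visit some such orbit $q'$; the uniform hyperbolic estimates on $\mathrm{Orb}(q)$ yield local stable and unstable manifolds of definite size, and a $\lambda$-lemma argument then produces transverse intersections of $W^{s,u}(\mathrm{Orb}(q))$ with $W^{u,s}(\mathrm{Orb}(q'))\subset W^{u,s}(\mathrm{Orb}(p))$, giving $q\sim q'\sim p$. With (P2) then applicable, the estimate $\prod\|Df|_{E^s(f^iq)}\|<\lambda^n$ contradicts the shadowing-derived lower bound, completing the argument. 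Coordinating the Pliss-type sub-segment selection with both the recurrence of $y$ and the passage near a known homoclinically related orbit, while preserving the ``bad'' lower bound on $\prod\|Df|_E\|$, is where I expect the bulk of the technical work to lie.
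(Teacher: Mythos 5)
Your overall strategy coincides with the paper's: suppose $E$ is not contracting, manufacture a periodic $\delta$-pseudo-orbit inside $H(p,f)$ along which the $E$-products stay above $\lambda_1^n$, shadow it by a genuine periodic orbit via (P3), show that the shadowing orbit is hyperbolic of index $\dim E$ and homoclinically related to $p$, and contradict (P2). The portion you describe as ``the bulk of the technical work'' is exactly the content of the paper's Lemma 4.3, and your sketch has a genuine gap there.

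Your Pliss-type selection, as stated, controls only the single endpoint product $\|Df^n|_{E(q)}\|<1$. That does give hyperbolicity of the right index (as you correctly argue), but it does \emph{not} give local stable and unstable manifolds of uniform size, which is what the $\lambda$-lemma step actually needs. For uniform-size manifolds on the shadowing orbit one needs two-sided Pliss control, namely $\prod_{i=0}^{k-1}\|Df|_{E(x_i)}\|<\lambda_2^k$ for every intermediate $k$ together with $\prod_{i=k}^{n}\|Df^{-1}|_{F(x_i)}\|<\lambda_2^{n-k+1}$ for every $k$, so that Pujals--Sambarino (Corollary 3.3 of \cite{PS2}) applies on the dominated set. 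Your criterion (``the average of $\log\|Df|_E\|$ lies strictly between $\log\lambda$ and $0$'') says nothing about intermediate partial products or about the $F$-side; and ``arranging the pseudo-orbit to also visit some orbit $q'\sim p$'' does not, by itself, force a transverse intersection of invariant manifolds. The paper solves both problems at once by anchoring the pseudo-orbit at a single \emph{good} periodic orbit $q\sim p$ whose orbit is a $\delta$-net of $H(p,f)$: the existence of such a $q$ that is a two-sided Pliss point follows from (P2) together with the pigeonhole argument of Lemma 4.2, which you do not invoke. The pseudo-orbit is then built as a long run along ${\rm Orb}(q)$, a jump to a ``bad'' point $b$ with $\prod_{j=0}^{n-1}\|Df|_{E(f^jb)}\|\geq 1$ for all $n$, a segment along ${\rm Orb}(b)$ of length calibrated so the running average is pinned between $\log\lambda_1$ and $\log\lambda_2$, and a jump back to ${\rm Orb}(q)$ closing the loop at $q$ itself. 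The long good runs supply the two-sided Pliss estimates, and closing the loop at $q$ means the shadowing orbit returns $\varepsilon$-close to $q$; uniform-size manifolds on both, plus domination, give the transverse intersections and hence $q'\sim q\sim p$. Incidentally, your ergodic measure and Poincar\'e recurrence are more machinery than needed: the closing of the loop is supplied by the dense good orbit $q$, not by recurrence of the bad point, and the bad point $b$ itself (whose existence is immediate from the non-contraction of $E$) plays the role of your $\nu$-generic point.
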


To prove the proposition we prepare some lemmas. The following lemma
is a folklore.
\begin{Lemma}\label{goodpoint}
Let $0<\lambda<1$ be given and let $q$ be a hyperbolic periodic
point of $f$ with $\|Df|_{E^s(x)}\|/m(Df|_{E^u(x)})<\lambda^2$ for
any $x\in {\rm Orb}(q)$. If
$$\prod_{i=0}^{\pi(q)-1}\|Df|_{E^s(f^i(q))}\|<\lambda ^{\pi(q)},$$
$$\prod_{i=0}^{\pi(q)-1}\|Df^{-1}|_{E^u(f^{-i}(q))}\|<\lambda ^{\pi(q)},$$
then there exists $q'\in {\rm Orb}(q)$ such that
$$\prod_{i=0}^{k-1}\|Df|_{E^s(f^i(q'))}\|\leq\lambda ^k,$$
$$\prod_{i=0}^{k-1}\|Df^{-1}|_{E^u(f^{-i}(q'))}\|\leq\lambda ^k,$$
for any $k\geq 1$.
\end{Lemma}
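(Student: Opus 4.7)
The plan is to apply Pliss-type argmax reasoning to a combined potential, coupled across the two cocycles by the strict domination. Passing to logarithms, set $\tilde\alpha_i=\log\|Df|_{E^s(f^iq)}\|-\log\lambda$ and $\tilde\beta_i=\log\|Df^{-1}|_{E^u(f^iq)}\|-\log\lambda$ with indices cyclic in $\{0,\dots,\pi(q)-1\}$. The hypotheses become $\sum\tilde\alpha_i<0$ and $\sum\tilde\beta_i<0$, while the domination reads $\tilde\alpha_{i-1}+\tilde\beta_i<0$ for every $i$. The conclusion is to produce a single $j^*$ with $\sum_{i=0}^{k-1}\tilde\alpha_{j^*+i}\le 0$ and $\sum_{i=0}^{k-1}\tilde\beta_{j^*-i}\le 0$ for every $k\ge 1$.

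Set $S_j=\sum_{i=0}^{j-1}\tilde\alpha_i$, $T_j=\sum_{i=0}^{j-1}\tilde\beta_i$, and $W_j=S_j-T_{j+1}$. I would select $j^*\in\{0,\dots,\pi(q)-1\}$ maximizing $W_j$, and set $q'=f^{j^*}q$. For any integer $k$, write $A_k=S_{j^*+k}-S_{j^*}$ and $B_k=T_{j^*+k+1}-T_{j^*+1}$. Two identities govern them: the argmax property gives $A_k-B_k=W_{j^*+k}-W_{j^*}\le 0$ whenever $j^*+k$ lies in the fundamental period, while the telescoped domination gives $A_k+B_k=\sum_{i=j^*}^{j^*+k-1}(\tilde\alpha_i+\tilde\beta_{i+1})$, strictly negative for $k>0$ and strictly positive for $k<0$. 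Pairwise addition and subtraction force $A_k<0$ for $k\in\{1,\dots,\pi-1-j^*\}$ (the forward stable Pliss bound at $q'$), and $B_{-k}>0$ for $k\in\{1,\dots,j^*\}$, equivalently $\sum_{i=0}^{k-1}\tilde\beta_{j^*-i}<0$ (the backward unstable Pliss bound at $q'$).

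To propagate these within-period bounds to all $k\ge 1$, I would use that $S_\pi<0$ and $T_\pi<0$ strictly: each additional full loop around the orbit subtracts a fixed positive quantity from the relevant forward or backward partial sum, so the per-period estimates only improve as $k$ grows beyond one period. Both full-period hypotheses enter this step.

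The main obstacle is that $W$ is not periodic: it shifts by $S_\pi-T_\pi$ per loop, of a priori indeterminate sign, so the argmax over the fundamental domain does not automatically dominate wrap-around values. Handling the indices $j^*+k\notin\{0,\dots,\pi-1\}$ — equivalently, forward $k$ near or beyond $\pi-j^*$, or backward $k$ beyond $j^*$ — requires combining all three strict hypotheses (the two full-period bounds and the per-step domination) to absorb the boundary correction $S_\pi-T_\pi$. Modulo this bookkeeping, the argument reduces to the clean two-variable algebra sketched above.
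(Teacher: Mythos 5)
Your within-period algebra is clean and correct: the identities $A_k-B_k=W_{j^*+k}-W_{j^*}$ and $A_k+B_k=\sum_{i=j^*}^{j^*+k-1}(\tilde\alpha_i+\tilde\beta_{i+1})$ are exactly right, and so is the conclusion that an argmax of $W$ together with domination forces $A_k<0$ and $B_{-k}>0$ for the indices where both are available. The difficulty you flag at the end, however, is not ``bookkeeping''; it is the substance of the lemma, and as written the argument is incomplete in a way that can fail at $k=1$.

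Concretely, the argmax of $W_j$ over $\{0,\dots,\pi-1\}$ need not be a valid $j^*$. Take $\pi=3$, $\tilde\alpha_i=-0.6$ for all $i$, $\tilde\beta_0=0.5$, $\tilde\beta_1=\tilde\beta_2=-0.6$. Then $\sum\tilde\alpha_i<0$, $\sum\tilde\beta_i<0$, and $\tilde\alpha_i+\tilde\beta_{i+1}<0$ for every $i$, yet $W_0=W_1=W_2=-0.5$ is constant. So $j^*=0$ is an argmax, but the backward unstable bound fails immediately: $\tilde\beta_{j^*}=\tilde\beta_0=0.5>0$, i.e., $\prod_{i=0}^{0}\|Df^{-1}|_{E^u(f^{-i}q')}\|>\lambda$. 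Here the only good choices are $j^*\in\{1,2\}$, and nothing in the argmax principle singles them out. This shows that the boundary cases you set aside (backward $k>j^*$, forward $k>\pi-1-j^*$) are not peripheral: when $j^*$ lies near an end of the fundamental domain they swallow all of one side, and the two-variable algebra gives no information there because $W$ has a drift $S_\pi-T_\pi$ of indeterminate sign per loop. A fix would have to either choose $j^*$ by a genuinely different rule (not a pure argmax of $W$) or add a non-trivial argument showing that some tie-broken argmax must satisfy both Pliss conditions simultaneously.

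For comparison, the paper's proof does not try to produce $j^*$ by a single variational principle. It first shows (by the standard Pliss telescoping) that the set $\{n_i\}$ of forward-good starting indices for $E^s$ and the set $\{m_i\}$ of backward-good starting indices for $E^u$ are each nonempty, and then proves $\{n_i\}\cap\{m_i\}\neq\emptyset$ by contradiction: if disjoint, one finds adjacent indices $m_a<n_b$ of the two types with nothing of either type strictly between them, proves by backward induction that $\prod_{j=l}^{n_b-1}\|Df|_{E^s(f^jq)}\|>\lambda^{n_b-l}$ for every $m_a\le l<n_b$, and then uses domination termwise to convert that expansion in $E^s$ into contraction for $Df^{-1}|_{E^u}$ back to $m_a$, which forces $n_b\in\{m_i\}$ — a contradiction. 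In other words, the paper feeds domination into the argument only after locating the two record sets and only across the ``bad'' gap between them; it never needs a single potential whose maximizer works globally. Your coupling idea is genuinely different and elegant where it applies, but to complete it you would in effect have to rediscover the paper's intersection argument in order to select the right $j^*$.
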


\begin{proof}
Firstly we can prove that there exist $n\in\mathbb{N}$ such that
$$\prod_{i=0}^{k-1}\|Df|_{E^s(f^i(f^nq))}\|\leq\lambda ^k$$ for any
$k\geq 1$. Otherwise, there exist $k_j\geq 1$ such that
$$\prod_{i=0}^{k_j-1}\|Df|_{E^s(f^i(f^jq))}\|>\lambda ^{k_j}$$
for any $j\in\mathbb{N}$. Hence we can take a sequence of positive
integers by $K_1=K_0, K_{i+1}=K_i+k_{K_i}$. Then we take $K_l<K_m$
such that $K_m-K_l$ is divisible by $\pi(q)$. We know that
$$\prod_{i=0}^{K_m-K_l-1}\|Df|_{E^s(f^i(f^{K_l}q))}\|>\lambda ^{K_m-K_l}$$
for all $i\in\mathbb{N}$. This contradicts the inequalities in the
assumption.

Let $0\leq n_1<n_2<\cdots$ be all nature numbers satisfying
$$\prod_{i=0}^{k-1}\|Df|_{E^s(f^i(f^{n_i}q))}\|\leq\lambda ^k$$
for all $k\geq 1$. Similarly, let $0\leq m_1<m_2<\cdots$ be all
nature numbers satisfying
$$\prod_{i=0}^{k-1}\|Df^{-1}|_{E^u(f^{-i}(f^{m_i}q))}\|\leq\lambda^k,$$
for all $k\geq 1$. From the above discussion we know that
$\{n_i\},\{m_i\}$ are all nonempty. We just need to prove that
$\{n_i\}\cap\{m_i\}\neq\emptyset$.

If $\{n_i\}\cap\{m_i\}=\emptyset$, then we can find two integers
$m_a<n_b$ such that $(m_a,n_b)$ contains no integers which belongs
to $\{n_i\}$ and $\{m_i\}$.

 {\noindent\bf
Claim.} For every $m_a\leq l<n_b$,
$$\prod_{j=l}^{n_b-1}\|Df|_{E^s(f^j(q))}\|>\lambda^{n_b-l}$$

\begin{proof}
We will prove the claim by induction. It is easy to check that if
$\|Df|_{E^s(f^{n_b-1}(q))}\|\leq\lambda$, then for any $k\geq 1$,
$$\prod_{j=n_b-1}^{n_b-1+k-1}\|Df|_{E^s(f^j(q))}\|\leq\lambda^k.$$
This contradicts $n_b-1\notin\{n_i\}$. Hence
$\|Df|_{E^s(f^{n_b-1}(q))}\|>\lambda$, therefor the claim is true
for $l=n_b-1$.

Assume the claim is true for $l+1,\cdots,n_b-1$. Hence
$$\prod_{j=n}^{n_b-1}\|Df|_{E^s(f^j(q))}\|>\lambda^{n_b-n},$$
for any $n\geq l+1$. If
$$\prod_{j=l}^{n_b-1}\|Df|_{E^s(f^j(q))}\|\leq\lambda^{n_b-k},$$
then
$$\prod_{j=l}^{l+k-1}\|Df|_{E^s(f^j(q))}\|\leq\lambda^k,$$
for any $k>1$, contradicting with $k\notin\{n_i\}$. Hence
$$\prod_{j=l}^{n_b-1}\|Df|_{E^s(f^j(q))}\|>\lambda^{n_b-l}.$$
This finishes the proof of the claim.
\end{proof}

Using the hypothesis that
$\|Df|_{E^s(x)}\|/m(Df|_{E^u(x)})<\lambda^2$ for any $x\in {\rm
Orb}(q)$, we can get
$\|Df^{-1}|_{E^u(fx)}\|<\lambda^2\|Df|_{E^s(x)}\|^{-1}$. It follows
that for any $m_a\leq l<n_b$,
$$\prod_{j=l+1}^{n_b}\|Df^{-1}|_{E^u(f^j(q))}\|<\lambda^{n_b-l}.$$
Hence $n_b\in\{m_i\}$, contradicting $\{n_i\}\cap\{m_i\}=\emptyset$.
Hence $\{m_i\}\cap\{n_j\}\neq\emptyset$. By taking
$m\in\{m_i\}\cap\{n_j\}$ and $q'=f^m(q)$ we finish the proof of the
lemma.
\end{proof}

If a hyperbolic periodic point $q$ satisfies
$$\prod_{i=0}^{k-1}\|Df|_{E^s(f^i(q))}\|\leq\lambda ^k,$$
$$\prod_{i=0}^{k-1}\|Df^{-1}|_{E^u(f^{-i}(q))}\|\leq\lambda ^k$$
for any $k>0$, we call it a ``good'' hyperbolic periodic point. It
is known that the ``good'' hyperbolic points always has uniform
sized stable and unstable manifolds if it is contained in an
invariant set with dominated splitting(See the Corollary 3.3 of
\cite{PS2}).

\begin{Lemma}\label{pseudoorbit}
Assume that $H(p,f)$ satisfies the hypothesis $(P1)$-$(P3)$ and the
bundle $E$ is not contracting. Then for any constants
$\lambda<\lambda_1<\lambda_2<1$ and $\delta>0$, there exist a
$\delta$-pseudo orbit $\{x_i\}_{i=0}^{i=n}$ in $H(p,f)$ such that:

\begin{enumerate}
\item $\prod_{i=0}^{k-1}\|Df|_{E(x_i)}\|<\lambda_2^k$ and $\prod_{i=k}^n\|Df^{-1}|_{F(x_i)}\|<\lambda_2^{n-k+1}$, for $k=1,2,...,n$.
\item $\prod_{i=0}^{n-1}\|Df|_{E(x_i)}\|>\lambda_1^n.$
\item $x_0=x_n$ is a ``good'' hyperbolic periodic point.
\end{enumerate}
\end{Lemma}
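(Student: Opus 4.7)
The plan is to exploit the failure of $E$-contraction on $H(p,f)$ to produce an orbit segment along which $E$ is not contracting, and splice it at both ends to long orbit segments of a ``good'' hyperbolic periodic point $q_0$ that contract $E$ strongly at rate $\lambda$. The $q_0$-segments create the Pliss-type buffer needed for (1), the middle segment supplies the non-contraction needed for (2), and the splicing is made possible by the transitivity of $H(p,f)$; the domination $(P1)$ converts the forward bound on $E$ into the backward bound on $F$ essentially for free.

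First, by $(P2)$ and Lemma \ref{goodpoint}, I would fix a good hyperbolic periodic point $q_0\in H(p,f)$ homoclinically related to $p$ of arbitrarily large period, whose orbit satisfies
$$\prod_{i=0}^{k-1}\|Df|_{E(f^iq_0)}\|\le\lambda^k,\qquad \prod_{i=0}^{k-1}\|Df^{-1}|_{F(f^{-i}q_0)}\|\le\lambda^k\quad(k\ge 1).$$
This $q_0$ will serve as both $x_0$ and $x_n$. Next, since $E$ is not uniformly contracting, there is an ergodic $f$-invariant measure $\mu$ supported on $H(p,f)$ with $\int\log\|Df|_E\|\,d\mu=\chi\ge 0$; Birkhoff applied to a $\mu$-generic $y\in H(p,f)$ yields, for arbitrarily large $m$,
$$\prod_{i=0}^{m-1}\|Df|_{E(f^iy)}\|\ge\lambda_1^m,$$
so the genuine segment $y,f(y),\ldots,f^{m-1}(y)$ supplies the lower bound required by (2).

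To assemble the pseudo-orbit, I would invoke transitivity of $H(p,f)$: for any $\delta>0$ there exist finite $\delta$-pseudo-orbits $\gamma_1\subset H(p,f)$ from $q_0$ to $y$ and $\gamma_2\subset H(p,f)$ from $f^m(y)$ back to $q_0$, of lengths depending only on $\delta$. I would arrange both pieces so that $\gamma_1$ begins, and $\gamma_2$ ends, with a long genuine run of length $N_q$ along the orbit of $q_0$, followed respectively preceded by a short chain of $\delta$-jumps. Concatenating $\gamma_1$, the genuine middle segment, and $\gamma_2$ yields a periodic $\delta$-pseudo-orbit $\{x_i\}_{i=0}^{n}$ in $H(p,f)$ with $x_0=x_n=q_0$, giving (3). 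The long $q_0$-head of $\gamma_1$ creates a forward ``Pliss buffer'': the partial product at index $N_q+j$ is dominated by $\lambda^{N_q}(e^{\chi})^{j}$ up to a bounded factor absorbed by the $\delta$-jumps, which stays below $\lambda_2^{N_q+j}$ as long as $j\le CN_q$ for a constant $C=C(\lambda,\lambda_2,\chi)>0$. A symmetric estimate handles the $\gamma_2$-tail, and combining the forward and middle contributions gives (2).

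The principal obstacle is the simultaneous quantitative matching of all three conditions at every intermediate index, and in particular the backward tail inequality (1b). For (1b) I would use $(P1)$, which gives $\|Df^{-1}|_{F(f(x))}\|\le\lambda^2/\|Df|_{E(x)}\|$: tail products of $\|Df^{-1}|_F\|$ reduce to inverse tail products of $\|Df|_E\|$, and the $q_0$-tail (rate $\lambda$) together with the non-contracting $y$-middle provide the needed lower bound on the $E$-tail. The $m$-independent error introduced by the finitely many $\delta$-jumps inside $\gamma_1,\gamma_2$ is absorbed into the $q_0$-buffers by taking $N_q$ large. The allowed range for the ratio $m/N_q$ (bounded above by the $\lambda_2$-Pliss requirement and below by the $\lambda_1$-lower bound) is non-empty precisely because $\chi\ge 0>\log\lambda$; coordinating the two buffers, forward and backward, so that they remain consistent at every index is the technical heart of the argument.
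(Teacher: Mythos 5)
Your overall strategy matches the paper's: build a periodic pseudo-orbit out of a long buffer along a ``good'' periodic orbit $q$, splice in a non-contracting segment in the $E$-direction, close up with a short tail, use domination $(P1)$ to convert the $E$-estimates into the backward $F$-estimates, and use the orbit of $q$ (chosen as a $\delta$-net in the paper, or via chain-transitivity in your version) to make the splice. The use of Lemma~\ref{goodpoint} to obtain the good point and the appeal to a non-contracting Birkhoff average to get the middle segment are the same ideas as in the paper (the paper works directly with a point $b$ satisfying $\prod_{j=0}^{n-1}\|Df|_{E(f^jb)}\|\ge 1$ for \emph{all} $n$, which is a cleaner normalization of ``non-contracting'' than your Birkhoff-generic $y$, but both are serviceable).

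However, there is a genuine gap in the quantitative step that you yourself flag as ``the technical heart'': the coordination of the two long $q_0$-buffers with the middle segment via a fixed ratio $m/N_q$ does not work in general. Write $a=-\log\lambda$, $b_1=-\log\lambda_1$, $b_2=-\log\lambda_2$, so $a>b_1>b_2>0$ and (taking $\chi=0$ for concreteness) your constraint from~(1) reads $m/N_q\le (a-b_2)/b_2$, while the constraint from~(2) with two buffers of length $N_q$ reads $m/N_q> 2(a-b_1)/b_1$. These are compatible only if $2b_2(a-b_1)<b_1(a-b_2)$, and setting $b_1=b_2+\epsilon$ this reduces to $b_2(a-b_2)<\epsilon(a+b_2)$, which fails once $\lambda_1$ and $\lambda_2$ are close. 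So the claim that the allowed range for $m/N_q$ ``is non-empty precisely because $\chi\ge0>\log\lambda$'' is not correct; the second long buffer, introduced to obtain the backward Pliss estimate~(1b), actually destroys the balance needed for~(2). The paper sidesteps this entirely: it uses \emph{one} long $q$-buffer at the start, a short tail of length $<\pi(q)$ at the end, and, crucially, a \emph{stopping rule} for the length $l$ of the middle segment, namely the first $l$ at which the cumulative average of $\log\|Df|_E\|$ crosses the intermediate threshold $(\log\lambda_1+\log\lambda_2)/2$. This stopping time simultaneously gives the upper estimate at every prefix (for~(1a)), the lower estimate on every suffix (which via~$(P1)$ yields~(1b) without any second buffer), and forces the total average above $\log\lambda_1$ for~(2) once the initial buffer is long. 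Without this stopping rule, or an equivalent device, the three conditions cannot be matched at every intermediate index. A secondary issue is that your connecting chains $\gamma_1,\gamma_2$ from $q_0$ to $y$ and back introduce blocks with uncontrolled $E$-expansion; the paper avoids this by choosing the good periodic orbit so that it is itself a $\delta$-net of the class, so a single $\delta$-jump takes you directly from the $q$-orbit to $b$ and back, at a cost bounded by $\pi(q)$ extra steps and hence absorbed uniformly as the buffer length grows.
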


\begin{proof}
Since $E$ is not contracting, one can find a ``bad'' point $b$ in
$H(p,f)$ such that
$$\prod_{j=0}^{n-1}\|Df|_{E(f^j b)}\|\geq 1$$
for all $n\geq 1$.

Since $H(p,f)$ is a homolinic class, there is a hyperbolic periodic
point $q$ homoclinicly related with $p$ such that ${\rm Orb}(q)$
forms a $\delta$-net of $\Lambda$, i.e., for every $a\in\Lambda$,
there is $x\in {\rm Orb}(q)$ such that $d(a,x)<\delta$. By the
hypothesis (P2) and lemma \ref{goodpoint}, we can assume that $q$ is
a ``good'' hyperbolic periodic point.

Now we construct the pseudo orbit $\{x_i\}_{i=0}^{i=n}$ by combining
the orbit of $q$ with the orbit of $b$.

Let $x_0=q, x_1=f(q), x_2=f^2(q), \cdots, x_{s\cdot\pi(q)}=q$ where
$s$ will be chosen big enough. Let $0<t_1\leq \pi(q)$ be an integer
such that $d(f^{t_1},b)<\delta$. Then we let
$x_{s\cdot\pi(q)+1}=f(q), x_{s\cdot\pi(q)+2}=f^2(q), \cdots,
x_{s\cdot\pi(q)+t_1-1}=f^{t_1-1}(q), x_{s\cdot\pi(q)+t_1}=b$. By the
choice of $q$ we know that
$$\frac{1}{k}\sum_{i=0}^{k-1}\log(\|Df|_{E(x_i)}\|)<\log\lambda<\frac{\log\lambda_1+\log\lambda_2}{2},$$
for every $1\leq k\leq s\pi(q)+t_1$.

Choose $l$ such that
$$\frac{1}{s\pi(q)+t_1+l}(\sum_{i=0}^{s\pi(q)+t_1-1}\log(\|Df|_{E(x_i)}\|)+\sum_{i=0}^{l-1}\log(\|Df|_{E(f^i(b))}\|))\geq\frac{\log\lambda_1+\log\lambda_2}{2},$$
and for any $l'<l$,
$$\frac{1}{s\pi(q)+t_1+l}(\sum_{i=0}^{s\pi(q)+t_1-1}\log(\|Df|_{E(x_i)}\|)+\sum_{i=0}^{l-1}\log(\|Df|_{E(f^i(b))}\|))<\frac{\log\lambda_1+\log\lambda_2}{2}.$$
The existence of $l$ is ensured by the property of $b$. Then we take
$x_{s\cdot\pi(q)+t_1+1}=f(b),
\cdots,x_{s\cdot\pi(q)+t_1+l-1}=f^{l-1}(b)$.

Take $0\leq t_2<\pi(q)$ such that $d(f^l(b),f^{-t_2}(q))<\delta$ and
$x_{s\pi(q)+t_1+l}=f^{-t_2}(q),x_{s\pi(q)+t_1+l+1}=f^{-t_2+1}(q),\cdots,x_{s\pi(q)+t_1+l+t_2}=q.$
This ends the construction of $\{x_i\}$. Roughly speaking, the
$\delta$-pseudo orbit $\{x_i\}$ is composed by some repeat of ${\rm
Orb}(q)$ and a segment of ${\rm Orb}(b)$.

Denote $S(k)=\sum_{i=0}^{k-1}\log(\|Df|_{E(x_i)}\|)$ and
$n=s\pi(q)+t_1+l+t_2$. Since $\log(\|Df|_{E(x)}\|)$ have a uniform
upper bound and $$S(s\cdot\pi(q)+t_1+l)\geq
(s\cdot\pi(q)+t_1+l)\cdot\frac{\log\lambda_1+\log\lambda_2}{2}.$$
The inequality $S(n)>n\cdot\lambda_1$ will be true by choosing $s$
big enough.

By the construction of $\{x_i\}$,
$$\frac{1}{k}S(k)<\frac{\log\lambda_1+\log\lambda_2}{2}$$ for any
$0<k<s\pi(q)+t_1+l$. Since $\log(\|Df|_E(x)\|)$ has a uniform upper
bound and $t_2<\pi(q)$, one can choose $s$ big enough such that for
any $0<k\leq n$, $\frac{1}{k}S(k)<\log\lambda_2$.

Denote $T(k)=\sum_{i=n-k+1}^n\log(\|Df^{-1}|_{F(x_i)}\|)$. Since $q$
is a ``good'' hyperbolic periodic point,
$$\frac{1}{k}T(k)<\log\lambda(<\log\lambda_2)$$ for any $k=1,2,\cdots,t_2+1$.
By the choice of $l$, we can get that
$$\frac{S(s\pi(q)+t_1+l)-S(k)}{s\pi(q)+t_1+l-k}>\frac{\log\lambda_1+\log\lambda_2}{2}$$
for any $k<s\pi(q)+t_1+l$. The hypothesis (P1) means that
$\log(\|Df^{-1}|_{F(f(x))}\|)<2\log\lambda-\log(\|Df|_{E(x)}\|)$.
Therefore
$$\frac{T(t_2+1+k)-T(t_2+1)}{k}<2\log\lambda-\frac{\log\lambda_1+\log\lambda_2}{2}<\log\lambda$$
for any $0<k<s\pi(q)+t_1+l$. Hence for every $0<k\leq n$
$$\frac{1}{k}T(k)<\log\lambda<\log\lambda_2.$$ This ends the proof
of the lemma.
\end{proof}

Now we prove Proposition \ref{Prop41}. Suppose that $H(p,f)$
satisfies $(P1)$-$(P3)$ and the bundle $E$ is not contracting. Fix
two constants $\lambda<\lambda_1<\lambda_2<1$. From Lemma
\ref{pseudoorbit} we know that for any $\delta>0$, there exists a
$\delta$-pseudo orbit $\{x_i\}_{i=0}^{n}$ with the properties in
Lemma \ref{pseudoorbit}. Let $q=x_0=x_n$ be the ``good'' periodic
point. By the assumption $(P3)$ we know that there exists a periodic
point $q'$ close to $q$ such that the orbit of $q'$ shadows the
pseudo orbit $\{x_i\}_{i=0}^n$.

If we choose $\delta$ small enough, from the properties (1) and (2)
in Lemma \ref{pseudoorbit} we know that $q'$ is also a ``good''
periodic point with respect to $\lambda_2$. It follows that $q'$ and
$q$ all have uniform sized stable and unstable manifolds, therefor
$q'$ will be homoclinic related with $q$ if $q'$ is close to $q$
enough. Hence we can get a $q'\in H(p,f)$ with large period (at
least $l$) such that
$\prod_{i=0}^{\pi(q')-1}\|Df|_{E^s(f^i(q'))}\|>\lambda_1^{\pi(q')}$,
contradicting hypothesis (P2). This proves Proposition \ref{Prop41}.

\end{document}